\documentclass[11pt,reqno,oneside]{amsart}
\usepackage[utf8]{inputenc}
\usepackage[T2A]{fontenc}
\usepackage[english]{babel}
\usepackage{amsmath, amssymb, amsthm}
\usepackage[pdfborder={0 0 0}, pdffitwindow = true, pdfstartview = FitH,  pdftitle = {The lower bound for the number of facets of a k-neighborly d-polytope with d+3 vertices},
pdfauthor = {A.N. Maksimenko}
pdfkeywords = {Gale diagram}
]{hyperref}

\usepackage[margin=2.75cm]{geometry}

\usepackage{subcaption}
\usepackage{tikz}
\usetikzlibrary{arrows} 
\usetikzlibrary{patterns}
\usetikzlibrary[calc]
\usetikzlibrary{decorations.pathreplacing} 

\definecolor{lightblue}{rgb}{0.7,0.8,1} 
\tikzstyle{thickr}=[thick, myred]  


\newcommand{\smallradius}{.07}

\newcommand{\point}{circle (\smallradius) }

\newcommand{\bradius}{2}
\newcommand{\Bradius}{2}

\newcommand{\diameter}[1]{\draw [dashed, thin] (#1:\bradius) -- ({180+#1}:\bradius);}

\tikzset{filled/.style={draw=black, fill=black}}
\tikzset{semifilled/.style={draw=black, fill=white}} 
\tikzset{empty/.style={draw=black, densely dotted, fill=white}}

\newcommand{\picRomb}
{
		\begin{tikzpicture}[scale=0.7, fill=black, draw=black] 
			\filldraw [empty, dotted] (0, 0) circle (\bradius);
			\diameter{90}
			\filldraw [filled] (90:\bradius) \point node[anchor=south] {$3$};
			\filldraw [filled] (-90:\bradius) \point node[anchor=north] {$3$};
			\diameter{0}
			\filldraw [filled] (0:\bradius) \point node[anchor=west] {$3$};
			\filldraw [filled] (180:\bradius) \point node[anchor=east] {$3$};
		\end{tikzpicture}
}

\newcommand{\picOnes}
{
		\begin{tikzpicture}[scale=0.7, filled] 
			\filldraw [empty, dotted] (0, 0) circle (\bradius);
			\diameter{135}
			\diameter{0}
			\diameter{-135}
			\diameter{90}
			\filldraw [filled] (0:\bradius) \point node[anchor=west] {$1$};
			\filldraw [filled] (45:\bradius) \point node[anchor=south west] {$1$};
			\filldraw [filled] (90:\bradius) \point node[anchor=south] {$1$};
			\filldraw [filled] (135:\bradius) \point node[anchor=south east] {$1$};
			\filldraw [filled] (180:\bradius) \point node[anchor=east] {$1$};
			\filldraw [filled] (-135:\bradius) \point node[anchor=north east] {$1$};
			\filldraw [filled] (-90:\bradius) \point node[anchor=north] {$1$};
			\filldraw [filled] (-45:\bradius) \point node[anchor=north west] {$1$};
		\end{tikzpicture}
}

\newcommand{\oseventh}{90/7}

\newcommand{\picCyclic}
{
		\begin{tikzpicture}[scale=0.7, filled] 
			\filldraw [empty, dotted] (0, 0) circle (\bradius);
			\diameter{180-5*\oseventh}
			\diameter{-3*\oseventh}
			\diameter{180-\oseventh}
			\diameter{\oseventh}
			\diameter{3*\oseventh-180}
			\diameter{5*\oseventh}
			\diameter{-90}
			\filldraw [empty] (\oseventh:\bradius) \point;
			\filldraw [filled] (3*\oseventh:\bradius) \point node[anchor=south west] {$1$};
			\filldraw [empty] (5*\oseventh:\bradius) \point;
			\filldraw [filled] (90:\bradius) \point node[anchor=south] {$1$};
			\filldraw [empty] (180 - 5*\oseventh:\bradius) \point;
			\filldraw [filled] (180 - 3*\oseventh:\bradius) \point node[anchor=south east] {$1$};
			\filldraw [empty] (180 - \oseventh:\bradius) \point;
			\filldraw [filled] (\oseventh-180:\bradius) \point node[anchor=east] {$1$};
			\filldraw [empty] (3*\oseventh-180:\bradius) \point;
			\filldraw [filled] (5*\oseventh-180:\bradius) \point node[anchor=north east] {$1$};
			\filldraw [empty] (-90:\bradius) \point;
			\filldraw [filled] (-5*\oseventh:\bradius) \point node[anchor=north west] {$1$};
			\filldraw [empty] (-3*\oseventh:\bradius) \point;
			\filldraw [filled] (-\oseventh:\bradius) \point node[anchor=west] {$1$};
		\end{tikzpicture}
}

\newcommand{\picShift}
{
		\begin{tikzpicture}[scale=0.7, fill=black, draw=black] 
			\filldraw [empty, dotted] (0, 0) circle (\bradius);
			\diameter{-120}
			\filldraw [filled] (60:\bradius) \point node[anchor=south west] {$m_i$};
			\draw [-stealth', thick] ({60+7}:\Bradius) arc ({60+7}:{90-7}:\Bradius);
			\filldraw [empty] (180+60:\bradius) \point node[anchor=north east] {$0$};
			\filldraw [semifilled] (120:\bradius) \point node[anchor=south east] {$m_{i-2}$};
			\filldraw [semifilled] (180+120:\bradius) \point node[anchor=north west] {$m_{i+n-2}$};
			\filldraw [semifilled] (30:\bradius) \point node[anchor=south west] {$m_{i+1}$};
			\filldraw [semifilled] (180+30:\bradius) \point node[anchor=north east] {$m_{i+n+1}$};
			\filldraw [draw=white, draw opacity = 0, fill=gray, fill opacity=0.3] %
			(88:{0.1 + \bradius}) arc (90:-90:{0.1 + \bradius})  -- cycle;
			\diameter{90}
			\filldraw [semifilled] (90:\bradius) \point node[anchor=south] {$m_{i-1}$};
			\filldraw [filled] (-90:\bradius) \point node[anchor=north] {$m_{i+n-1}$};
		\end{tikzpicture}
}

\newcommand{\picSimplicialA}
{
		\begin{tikzpicture}[scale=0.7, filled] 
			\filldraw [empty, dotted] (0, 0) circle (\bradius);
			\diameter{180-5*\oseventh}
			\diameter{-3*\oseventh}
			\diameter{180-\oseventh}
			\diameter{\oseventh}
			\diameter{3*\oseventh-180}
			\diameter{5*\oseventh}
			\diameter{-90}
			\filldraw [empty] (\oseventh:\bradius) \point;
			\filldraw [filled] (3*\oseventh:\bradius) \point node[anchor=south west] {$1$};
			\filldraw [empty] (5*\oseventh:\bradius) \point;
			\filldraw [filled] (90:\bradius) \point node[anchor=south] {$2$};
			\filldraw [empty] (180 - 5*\oseventh:\bradius) \point;
			\filldraw [filled] (180 - 3*\oseventh:\bradius) \point node[anchor=south east] {$1$};
			\filldraw [empty] (180 - \oseventh:\bradius) \point;
			\filldraw [filled] (\oseventh-180:\bradius) \point node[anchor=east] {$2$};
			\filldraw [empty] (3*\oseventh-180:\bradius) \point;
			\filldraw [filled] (5*\oseventh-180:\bradius) \point node[anchor=north east] {$1$};
			\filldraw [empty] (-90:\bradius) \point;
			\filldraw [filled] (-5*\oseventh:\bradius) \point node[anchor=north west] {$2$};
			\filldraw [empty] (-3*\oseventh:\bradius) \point;
			\filldraw [filled] (-\oseventh:\bradius) \point node[anchor=west] {$1$};
		\end{tikzpicture}
}

\newcommand{\picSimplicialB}
{
		\begin{tikzpicture}[scale=0.7, filled] 
			\filldraw [empty, dotted] (0, 0) circle (\bradius);
			\diameter{180-5*\oseventh}
			\diameter{-3*\oseventh}
			\diameter{180-\oseventh}
			\diameter{\oseventh}
			\diameter{3*\oseventh-180}
			\diameter{5*\oseventh}
			\diameter{-90}
			\filldraw [filled] (\oseventh:\bradius) \point node[anchor=west] {$1$};
			\filldraw [empty] (3*\oseventh:\bradius) \point;
			\filldraw [filled] (5*\oseventh:\bradius) \point node[anchor=south] {$1$};
			\filldraw [filled] (90:\bradius) \point node[anchor=south] {$1$};
			\filldraw [filled] (180 - 5*\oseventh:\bradius) \point node[anchor=south] {$1$};
			\filldraw [empty] (180 - 3*\oseventh:\bradius) \point;
			\filldraw [filled] (180 - \oseventh:\bradius) \point node[anchor=east] {$1$};
			\filldraw [filled] (\oseventh-180:\bradius) \point node[anchor=east] {$1$};
			\filldraw [filled] (3*\oseventh-180:\bradius) \point node[anchor=north east] {$1$};
			\filldraw [empty] (5*\oseventh-180:\bradius) \point;
			\filldraw [filled] (-90:\bradius) \point node[anchor=north] {$1$};
			\filldraw [filled] (-5*\oseventh:\bradius) \point node[anchor=north west] {$1$};
			\filldraw [filled] (-3*\oseventh:\bradius) \point node[anchor=north west] {$1$};
			\filldraw [empty] (-\oseventh:\bradius) \point;
		\end{tikzpicture}
}

\newcommand{\picMMS}
{
		\begin{tikzpicture}[scale=0.7, filled] 
			\filldraw [empty, dotted] (0, 0) circle (\bradius);
			\diameter{-5*\oseventh}
			\diameter{3*\oseventh}
			\filldraw [empty] (180 - 5*\oseventh:\bradius) \point node[anchor=south] {$j+n$};
			\filldraw [filled] (180 - 2*\oseventh:\bradius) \point node[anchor=east] {$s$};
			\filldraw [empty] (180 + 3*\oseventh:\bradius) \point node[anchor=north east] {$i+n$};
			\filldraw [filled] (-5*\oseventh:\bradius) \point node[anchor=north] {$j$};
			\filldraw [empty] (-90:\bradius) \point node[anchor=north east] {$j+1$};
			\filldraw [empty] (\oseventh:\bradius) \point node[anchor=west] {$i+1$};
			\filldraw [filled] (3*\oseventh:\bradius) \point node[anchor=south west] {$i$};
			\filldraw [draw=white, draw opacity = 0, fill=gray, fill opacity=0.3] %
			(3*\oseventh:\bradius) -- (-5*\oseventh:\bradius) -- (180 - 2*\oseventh:\bradius) -- cycle;
			\draw [-stealth', thick] ({3*\oseventh - 5}:\Bradius) arc ({3*\oseventh - 5}:{1*\oseventh + 5}:\Bradius);
			\draw [-stealth', thick] ({-5*\oseventh - 5}:\Bradius) arc ({-5*\oseventh - 5}:{-90 + 5}:\Bradius);
		\end{tikzpicture}
}

\newcommand{\picConsecutiveA}
{
		\begin{tikzpicture}[scale=0.7, fill=black, draw=black] 
			\filldraw [empty, dotted] (0, 0) circle (\bradius);
			\diameter{60}
			\diameter{90}
			\diameter{120}
			\filldraw [filled] (120:\bradius) \point node[anchor=south east] {$m_{i}$};
			\filldraw [empty] (90:\bradius) \point node[anchor=south] {$0$};
			\filldraw [filled] (60:\bradius) \point node[anchor=south west] {$m_{i+2}$};
			\draw [-stealth', thick] ({60+7}:\Bradius) arc ({60+7}:{90-7}:\Bradius);
			\filldraw [empty] (180+60:\bradius) \point node[anchor=north east] {$0$};
			\filldraw [filled] (-90:\bradius) \point node[anchor=north] {$m_{i+1+n}$};
			\filldraw [filled] (-60:\bradius) \point node[anchor=north west] {$m_{i+n}$};
		\end{tikzpicture}
}

\newcommand{\picConsecutiveB}
{
		\begin{tikzpicture}[scale=0.7, fill=black, draw=black] 
			\filldraw [empty, dotted] (0, 0) circle (\bradius);
			\diameter{60}
			\diameter{90}
			\diameter{120}
			\filldraw [filled] (120:\bradius) \point node[anchor=south east] {$m_{i}$};
			\filldraw [empty] (90:\bradius) \point node[anchor=south] {$0$};
			\filldraw [filled] (60:\bradius) \point node[anchor=south west] {$m_{i+2}$};
			\draw [-stealth', thick] ({120-7}:\Bradius) arc ({120-7}:{90+7}:\Bradius);
			\filldraw [filled] (180+60:\bradius) \point node[anchor=north east] {$m_{i+2+n}$};
			\filldraw [filled] (-90:\bradius) \point node[anchor=north] {$m_{i+1+n}$};
			\filldraw [empty] (-60:\bradius) \point node[anchor=north west] {$0$};
		\end{tikzpicture}
}

\newcommand{\picSumTwoA}
{
		\begin{tikzpicture}[scale=0.7, fill=black, draw=black] 
			\filldraw [empty, dotted] (0, 0) circle (\bradius);
			\diameter{30}
			\diameter{60}
			\diameter{90}
			\filldraw [filled] (90:\bradius) \point node[anchor=south] {$m_{n+i-1}$};
			\filldraw [filled] (60:\bradius) \point node[anchor=south west] {$m_{n+i}$};
			\filldraw [filled] (30:\bradius) \point node[anchor=south west] {$m_{n+i+1}$};
			\filldraw [filled] (-90:\bradius) \point node[anchor=north] {$1$};
			\filldraw [empty] (180+60:\bradius) \point node[anchor=north east] {$0$};
			\filldraw [filled] (180+30:\bradius) \point node[anchor=north east] {$m_{i+1}$};
			\draw [-stealth', thick] ({60+7}:\Bradius) arc ({60+7}:{90-7}:\Bradius);
		\end{tikzpicture}
}

\newcommand{\picSumTwoB}
{
		\begin{tikzpicture}[scale=0.7, fill=black, draw=black] 
			\filldraw [empty, dotted] (0, 0) circle (\bradius);
			\diameter{30}
			\diameter{60}
			\diameter{90}
			\filldraw [filled] (90:\bradius) \point node[anchor=south] {$m_{n+i-2}$};
			\filldraw [filled] (60:\bradius) \point node[anchor=south west] {$m_{n+i-1}$};
			\filldraw [filled] (30:\bradius) \point node[anchor=south west] {$m_{n+i}$};
			\filldraw [filled] (-90:\bradius) \point node[anchor=north] {$m_{i-2}$};
			\filldraw [empty] (180+60:\bradius) \point node[anchor=north east] {$0$};
			\filldraw [filled] (180+30:\bradius) \point node[anchor=north east] {$1$};
			\draw [-stealth', thick] ({60-7}:\Bradius) arc ({60-7}:{30+7}:\Bradius);
		\end{tikzpicture}
}

\newcommand{\picHalfA}
{
		\begin{tikzpicture}[scale=0.7, filled] 
			\filldraw [empty, dotted] (0, 0) circle (\bradius);
			\diameter{135}
			\diameter{0}
			\diameter{-135}
			\diameter{90}
			\filldraw [semifilled] (0:\bradius) \point node[anchor=west] {$m_3$};
			\filldraw [semifilled] (45:\bradius) \point node[anchor=south west] {$m_2$};
			\filldraw [semifilled] (90:\bradius) \point node[anchor=south] {$m_1$};
			\filldraw [empty] (135:\bradius) \point node[anchor=south east] {$0$};
			\filldraw [filled] (180:\bradius) \point node[anchor=east] {$m_7$};
			\filldraw [empty] (-135:\bradius) \point node[anchor=north east] {$0$};
			\filldraw [semifilled] (-90:\bradius) \point node[anchor=north] {$m_5$};
			\filldraw [semifilled] (-45:\bradius) \point node[anchor=north west] {$m_4$};
		\end{tikzpicture}
}

\newcommand{\picHalfB}
{
		\begin{tikzpicture}[scale=0.7, filled] 
			\filldraw [empty, dotted] (0, 0) circle (\bradius);
			\diameter{135}
			\diameter{0}
			\diameter{-135}
			\diameter{90}
			\filldraw [semifilled] (0:\bradius) \point node[anchor=west] {$m_3$};
			\filldraw [filled] (45:\bradius) \point node[anchor=south west] {$m_2$};
			\filldraw [filled] (90:\bradius) \point node[anchor=south] {$m_1$};
			\filldraw [empty] (135:\bradius) \point node[anchor=south east] {$0$};
			\filldraw [filled] (180:\bradius) \point node[anchor=east] {$m_7$};
			\filldraw [filled] (-135:\bradius) \point node[anchor=north east] {$1$};
			\filldraw [filled] (-90:\bradius) \point node[anchor=north] {$m_5$};
			\filldraw [semifilled] (-45:\bradius) \point node[anchor=north west] {$m_4$};
		\end{tikzpicture}
}

\newcommand{\picInduction}
{
		\begin{tikzpicture}[scale=0.7, filled] 
			\filldraw [empty, dotted] (0, 0) circle (\bradius);
			\diameter{90}
			\diameter{60-180}
			\diameter{30}
			\diameter{-30}
			\diameter{180-60}
			\filldraw [semifilled] (90:\bradius) \point node[anchor=south] {$x_1$};
			\filldraw [semifilled] (60:\bradius) \point node[anchor=south west] {$x_2$};
			\filldraw [semifilled] (30:\bradius) \point node[anchor=south west] {$x_3$};
			\filldraw [semifilled] (-30:\bradius) \point node[anchor=north west] {$x_{n-1}$};
			\filldraw [semifilled] (-60:\bradius) \point node[anchor=north west] {$x_n$};
			\filldraw [semifilled] (-90:\bradius) \point node[anchor=north] {$y_1$};
			\filldraw [semifilled] (-120:\bradius) \point node[anchor=north east] {$y_2$};
			\filldraw [semifilled] (-150:\bradius) \point node[anchor=north east] {$y_3$};
			\filldraw [semifilled] (150:\bradius) \point node[anchor=south east] {$y_{n-1}$};
			\filldraw [semifilled] (120:\bradius) \point node[anchor=south east] {$y_{n}$};
		\end{tikzpicture}
}

\usepackage{listings}


\theoremstyle{plain}
\newtheorem{theorem}{Theorem}
\newtheorem{lemma}[theorem]{Lemma}
\newtheorem{proposition}[theorem]{Proposition}

\theoremstyle{definition}
\newtheorem{example}{Example}
\theoremstyle{remark}

\newcommand{\R}{\mathbb{R}}    
\newcommand{\No}{\textnumero}    

\DeclareMathOperator{\ext}{ext}
\DeclareMathOperator{\conv}{conv}

\title{The lower bound for the number of facets \\of a k-neighborly d-polytope with d+3 vertices}

\author{Aleksandr Maksimenko}
\thanks{Supported by the~project \No\,1.5768.2017/П220 of P.\,G.~Demidov Yaroslavl State University within State Assignment for~Research.}
\address{Laboratory of Discrete and Computational Geometry, P.G. Demidov Yaroslavl State University, ul. Sovetskaya 14, Yaroslavl 150000, Russia} 
\email{maximenko.a.n@gmail.com}


\begin{document}

\begin{abstract}
We have found the minimal difference $\Delta(k) = \min\limits_P (f_{d-1}(P) - f_{0}(P))$ between the number of facets and the number of vertices of a~$k$-neigh\-bor\-ly $d$-polytope $P$ for the case $f_{0}(P) = d+3$: $\Delta(2) = 4$, $\Delta(3) = 15$, and~$\Delta(k) = 2 (k^2 - 1)$ for $k \ge 4$.
\end{abstract}

\maketitle

\section{Introduction}

In this paper we consider only convex $d$-dimensional polytopes, i.e. \emph{$d$-polytopes}.
A $d$-polytope $P$ is called \emph{$k$-neigh\-bor\-ly} if every set of $k$ vertices forms the vertex set of a face of $P$.
Since every $d$-polytope is 1-neigh\-bor\-ly, we assume $k \ge 2$ when we use the word ``$k$-neigh\-bor\-ly''.
For $k > \lfloor d/2\rfloor$, there is only one combinatorial type of a $k$-neigh\-bor\-ly $d$-polytope~--- a~$d$-simplex~\cite[p.~123]{Grunbaum:2003}.
Therefore, we suppose $d \ge 2k$.
A $\lfloor d/2\rfloor$-neigh\-bor\-ly polytope is called \emph{neigh\-bor\-ly}.

There exists a widespread feeling that neighborly polytopes are very common among convex polytopes~\cite{Henk:2004, Gillmann:2006, Grunbaum:2003}.
The current estimates for the number of combinatorial types of neighbborly $d$-polytopes are very close asymptotically to the ones of (all) $d$-polytopes~\cite{Padrol:2013}.
(In~\cite{Firsching:2017}, there are presented modern achievements in classification of neighborly polytopes with small dimension ($\le 10$) and number of vertices ($\le 12$).)
Moreover, $k$-neigh\-bor\-ly polytopes appear as faces (with superpolynomial number of vertices) of polytopes
associated with NP-hard combinatorial optimization problems~\cite{Maksimenko:2014, Maksimenko:2016}. 

The problem of estimating the number of facets $f_{d-1}(P)$  (where $P$ belongs to some class of $d$-polytopes) with respect to the number of vertices $f_0(P)$ is well known.
For the class of simplicial polytopes, the problem is known as the upper bound and the lower bound theorems~\cite[Chap.~10]{Grunbaum:2003}.
In~1970, P.~McMullen~\cite{McMullen:1970} stated that neighborly $d$-polytopes have the maximum number of facets over all $d$-po\-ly\-to\-pes with the same number of vertices.
In~\cite{Maksimenko:2010}, there was posed the conjecture that $f_{d-1}(P) \ge f_{0}(P)$ for a 2-neighborly $d$-polytope $P$.
The validity of the conjecture was proved for two cases: 1) $d \le 6$ and 2) $f_0(P) \le d+5$.

Let $\Delta_m(k) = \min\limits_P (f_{d-1}(P) - f_{0}(P))$, where  $f_{0}(P) = d + m$ and $P$ is a $k$-neighborly $d$-polytope.
Obviously, $\Delta_m(1) = 2 - m$ and it is attained on a prism over a $(m-1)$-simplex. 
Using the Gale diagrams~\cite[Sec. 6.3]{Grunbaum:2003}, it is easy to show that $\Delta_2(k) = (k+1)^2 - 2(k+1) = k^2 - 1$.
Below we prove the following
\begin{theorem}
	\label{th:d3}
	$\Delta_3(2) = 4$, $\Delta_3(3) = 15$, and~$\Delta_3(k) = 2 (k^2 - 1)$ for $k \ge 4$.
\end{theorem}
The proof of the theorem is based on the fact, that each $d$-polytope with $d+3$ vertices can be represented by a reduced Gale diagram~--- a set of $d+3$ points, placed at the center and at vertices of a regular $2n$-gon (see~\cite[Sec. 6.3]{Grunbaum:2003} and~\cite{Fusy:2006}).

%
%

\section{Reduced Gale diagrams}
\label{sec:GaleD}

A \emph{reduced Gale diagram} of a polytope $P$ consists of points in $\R^2$, placed at the center $O$ and the vertices 
of a regular $2n$-gon, $n \ge 2$.
For the sake of convinience, we enumerate the vertices of the $2n$-gon by numbers from $0$ to $2n-1$ 
going clockwise. (It does not matter what point will be the first.)
The $2n$ points have nonnegative integer labels (multiplicities) $m_0$, $m_1$, \dots, $m_{2n-1}$.
The center $O$ has label $m(O)$.
In~the following we assume
\[
m_i \stackrel{\text{def}}{=} m_{i\bmod 2n}.
\]
In~particular, $m_{i+2n} \stackrel{\text{def}}{=} m_{i-2n} \stackrel{\text{def}}{=} m_i$.
A pair of opposite labels $m_i$ and $m_{i+n}$, $i\in[n]$, is called a \emph{diameter}.
Hereinafter, $n$ is always the number of diameters of a reduced Gale diagram.
These labels have the following properties~(see~\cite[Sec.~6.3]{Grunbaum:2003} and~\cite{Fusy:2006}):
\begin{description}
	\item[P1] $m(O) + m_1 + \ldots + m_{2n-1} = d+3$. 
	(The sum of all labels equals the number of vertices of $P$.)
	\item[P2] $m_i + m_{i+n} > 0$ for every $i\in[n]$. 
	(The sum of the labels of a diameter is not equal to 0.
	Otherwise, we can construct this diagram on a $(2n-2)$-gon.)
	\item[P3] $m_{i-1} + m_i > 0$ for every $i\in[2n]$. 
	(Two neighbour vertices of the $2n$-gon cannot both have label 0.
	Otherwise the appropriate two diameters can be glued and we get a $(2n-2)$-gon.)					
	\item[P4] $\sum\limits_{j=i+1}^{i+n-1} m_{i} \ge 2$ for every $i\in[2n]$. 
	(The sum of the labels in any open half-plane is at least $2$.) 
\end{description}

A subset $S$ of these $d+3$ points\footnote{Take into account the multiplicities.} called a \emph{cofacet}
if the convex hull of $S$ is a simplex with $O$ in its relative interior
and $\ext(S) = \ext(\conv(S))$ (the set of vertices of $\conv(S)$ coincides with $S$).
Therefore, there are only three types of cofacets: 
the center $O$ itself, 
two opposite vertices of the $2n$-gon, 
three vertices of the $2n$-gon that form a triangle with $O$ in its interior.

\begin{theorem}{\cite[Sec.~6.3]{Grunbaum:2003}}
	\label{thm:GaleProperties}
	There are one-to-one correspondence between combinatorial types 
	of $d$-polytopes with $d+3$ vertices and reduced Gale diagrams with the properties P1--P4.
	Moreover,
	\begin{description}
		\item[F] The number of facets of a polytope is equal to the number of cofacets of the appropriate reduced Gale diagram.
		\item[S] An appropriate polytope is simplicial \textbf{iff} $m(O) = 0$ and $m_i m_{n+i} = 0$ for every $i\in [n]$. 
		\item[N] An appropriate polytope is $k$-neigh\-bor\-ly \textbf{iff} 
		$\sum\limits_{j=i+1}^{i+n-1} m_{i} \ge k+1$ for every $i\in[2n]$.
		(The sum of the labels in any open half-plane is at least $k+1$.)
		Such a diagram is called \emph{$k$-neigh\-bor\-ly}.
	\end{description}
\end{theorem}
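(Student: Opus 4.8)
The plan is to derive everything from the standard Gale duality for point configurations, specialised to $n = d+3$ so that the Gale vectors live in $\mathbb{R}^2$. First I would set up the Gale transform: given the $d+3$ vertices $v_1,\dots,v_{d+3}$ of $P$, lift them to $\hat v_i = (v_i,1)\in\mathbb{R}^{d+1}$; since $P$ is $d$-dimensional these points affinely span, so the space of linear dependences $\{\lambda : \sum_i \lambda_i \hat v_i = 0\}$ has dimension $(d+3)-(d+1)=2$. Choosing a basis of this space and reading it off coordinatewise assigns to each vertex a \emph{Gale vector} $\bar v_i\in\mathbb{R}^2$. The combinatorial core is the Gale duality criterion, which I would quote as known: a subset $S$ of vertices is the vertex set of a face of $P$ if and only if $0\in\operatorname{relint}\operatorname{conv}\{\bar v_j : v_j\notin S\}$. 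Everything else is built on top of this.

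Second, I would read the three structural statements off this criterion directly. For (F): a facet is a maximal proper face, so its complement in the Gale diagram is an inclusion-\emph{minimal} set $T$ with $0\in\operatorname{relint}\operatorname{conv}(T)$; in $\mathbb{R}^2$ these minimal positive circuits are exactly a single point at the origin, an antipodal pair $\{p,-\lambda p\}$ with $\lambda>0$, or a triangle containing the origin in its interior, which is precisely the list of cofacet types, so $f_{d-1}(P)$ equals the number of cofacets. For (N): every $k$-subset $S$ is a face iff deleting any $k$ points leaves the origin in the relative interior of the hull of the rest; the worst deletion tries to empty an open half-plane, so this holds iff every open semicircle carries label-sum at least $k+1$, and the case $k=1$ gives P4 (sum $\ge 2$), which expresses that each point is genuinely a vertex and $P$ is full-dimensional. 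For (S): since $|S|+|T|=d+3$, a facet is a simplex (has exactly $d$ vertices) iff its complementary cofacet has exactly $3$ points, whence $P$ is simplicial iff no cofacet is of the origin type or the antipodal-pair type, i.e. iff $m(O)=0$ and $m_i m_{i+n}=0$ for all $i$.

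Third, I would justify the passage to the \emph{reduced} diagram and the bijection on combinatorial types. Because the face lattice depends only on which subsets are positive circuits, equivalently on the oriented matroid of the Gale vectors, the combinatorial type is unchanged when each nonzero $\bar v_i$ is normalised to the unit circle and the points are slid to equally spaced angular positions, provided no point is moved across another point's direction or its antipode. Collecting coincident directions into multiplicities $m_0,\dots,m_{2n-1}$ on a regular $2n$-gon (antipodal directions at opposite vertices, origin points contributing $m(O)$) yields P1 by counting vectors. Conditions P2 and P3 are exactly the statements that this representation is irredundant: a diameter with both labels zero could be deleted, and two adjacent zero labels would let two angular sectors be merged, in either case producing an equivalent diagram on a smaller gon; forbidding these makes the reduced diagram canonical, so the map from combinatorial types to P1--P4 diagrams is well-defined and injective, while the Gale construction shows it is surjective.

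The step I expect to be the main obstacle is this last one: proving that the combinatorial type is genuinely an invariant of the oriented matroid, that the normalisation to the regular $2n$-gon can always be performed without sweeping a point through a forbidden direction, and that P2 and P3 pin down a unique reduced form. Once Gale duality is in hand the readings of (F), (N), and (S) are essentially bookkeeping on the three cofacet types; the delicate part is controlling the continuous deformation of the diagram and verifying that no intermediate configuration flips a positive circuit, which is exactly where the regularity of the $2n$-gon and the irredundancy conditions do the real work.
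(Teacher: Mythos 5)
The first thing to say is that the paper does not prove this theorem at all: it is quoted from Gr\"unbaum~\cite[Sec.~6.3]{Grunbaum:2003} (with Fusy as an auxiliary reference), so there is no internal proof to compare your attempt against; the only benchmark is the standard Gale-diagram theory that the citation points to, and that is exactly the route you take. Your skeleton is sound: the two-dimensional space of affine dependences, the criterion ``$S$ is the vertex set of a face iff $0\in\operatorname{relint}\conv\{\bar v_j : v_j\notin S\}$,'' and the identification of facets with inclusion-minimal complements, which in $\R^2$ are precisely the three cofacet types. The readings of F and S from this are indeed correct bookkeeping.

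Two of your steps are thinner than they should be. First, in N, the implication ``some open semicircle $H=\{x : c\cdot x>0\}$ has label sum $\le k$ $\Rightarrow$ $P$ is not $k$-neighborly'' does not follow just from ``the worst deletion empties a half-plane'': after deleting the points of $H$ plus fillers to reach $k$ points, the complement $T$ satisfies $c\cdot x\le 0$, but if $T$ happened to lie entirely on the line $c\cdot x=0$ with points on both sides of the origin, then $0\in\operatorname{relint}\conv(T)$ and your chosen $k$-set is a face after all. One must choose the fillers so that at least one point of the opposite open semicircle $-H$ remains in $T$ (possible because $-H$ carries at least $2$ points by P4 and $d+2\ge k$ points are available elsewhere); then $-c$ gives the strict separation. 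Second, the canonicalization step that you flag as the main obstacle is real, but you make it harder than necessary by framing it as a continuous deformation that must avoid ``sweeping a point through a forbidden direction.'' No homotopy argument is needed: the criterion $0\in\operatorname{relint}\conv(T)$ for a planar configuration depends only on (i) which points of $T$ lie at the origin, (ii) the cyclic order of the directions of the remaining points, and (iii) which pairs of directions are antipodal --- exactly the data the reduced diagram records. Hence any two configurations with the same reduced data have identical face lattices by inspection of the criterion, injectivity follows from the irredundancy conditions P2--P3 (together with the implicit convention, shared by the paper, that diagrams are taken up to rotation and reflection of the $2n$-gon), and surjectivity is the standard realizability statement that P4 characterizes Gale diagrams of polytopes. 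With those two repairs your proposal is a faithful reconstruction of the proof in the cited source.
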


Note that the properties P2 and P3 always may be obtained by making \emph{standard operations}:
\begin{description}
	\item[D] Delete the labels with $m_i = m_{n+i} = 0$ and reduce the number of diameters.
	\item[G] If $m_i = m_{i+1} = 0$, then glue $m_i$ and $m_{i+1}$, $m_{i+n}$ and $m_{i+1+n}$,
	and reduce the number of diameters.
\end{description}
The standard operations do not affect the $k$-neighborliness and the number of cofacets.
Thus, in the following we do not always care about checking the properties P2 and P3.

Note also that changing of the value of the label $m(O)$ does not affect the properties P2, P3, P4, and N, and does not change the difference $f_{d-1}(P) - f_0(P)$ for the appropriate polytope $P$.
Thus, without loss of generality, we assume $m(O) = 0$. 

We will say that a $k$-neigh\-bor\-ly reduced Gale diagram is \emph{extremal}
if the difference between the number of cofacets and the sum of labels is minimal
among all $k$-neigh\-bor\-ly reduced Gale diagrams
(in~other words, the difference is equal to $\Delta_{3}(k)$).

Turn here to examples.

\begin{example}
	\label{ex:1}
	Let $n=2$ and $m_i = k+1$, $i\in[4]$.
	Fig.~\ref{fig:ex1} shows such a diagram for $k=2$.
	The number of vertices of an appropriate $k$-neigh\-bor\-ly polytope is equal to $4(k+1)$,
	the number of facets equals $2(k+1)^2$, and the difference is $2(k^2-1)$.
\end{example}

\begin{example}
	\label{ex:2}
	Let $n = k+2$ and all the labels are equal to 1 (see fig. \ref{fig:ex2}).
	Hence, the number of vertices equals $2k+4$, the number of facets equals $2 \binom{k+2}{3} + k+2$,
	and the difference is $(k+2)(k^2+k-3)/3$.
	Note that the difference is less than in the example \ref{ex:1} for $k \le 3$.
\end{example}

\begin{example}
	\label{ex:3}
	Let $n = 2k+3$, $m_{2i} = 0$, and $m_{2i-1} = 1$ for $i \in [n]$.
	Fig. \ref{fig:ex3} shows the case $k=2$.
	An appropriate $k$-neigh\-bor\-ly polytope is a simplicial one.
	It has dimension $2k$,
	$2k+3$ vertices and $(2k+3) (k+2) (k+1) / 6$ facets.
	The difference between facets and vertices is $(2k+3) (k+4) (k-1) / 6$
	and it is greater than in the previous examples for every $k\ge 2$.
	Hence, this diagram is not extremal.
\end{example}

\begin{figure}
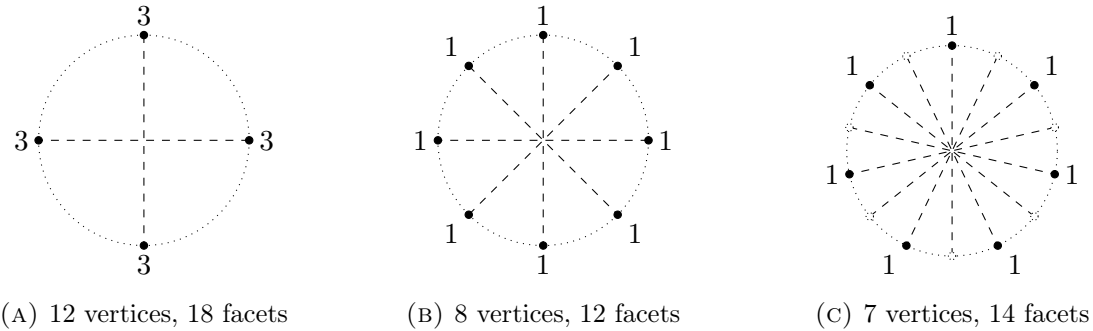

	\begin{minipage}[t]{.33\linewidth}
		\centering\picRomb
		\subcaption{12 vertices, 18 facets}\label{fig:ex1}
	\end{minipage}%
	\begin{minipage}[t]{.33\linewidth}
		\centering\picOnes
		\subcaption{8 vertices, 12 facets}\label{fig:ex2}
	\end{minipage}
	\begin{minipage}[t]{.33\linewidth}
		\centering\picCyclic
		\subcaption{7 vertices, 14 facets}\label{fig:ex3}
	\end{minipage}
	\caption{Reduced Gale diagrams for some 2-neighborly polytopes}\label{fig:2neighborly}
\end{figure}


%
%

\section{Minimal Gale diagrams}

A $k$-neigh\-bor\-ly reduced Gale diagram is called \emph{minimal}
if reducing any of the labels $m_i$ violates the condition N.
It is easy to prove that an extremal diagram is a minimal one (since reducing of a label implies reducing of the number of cofacets).
Thus, below we consider only minimal diagrams.

Let us list two obvious properties of a minimal $k$-neigh\-bor\-ly reduced Gale diagram:
\begin{align*}
m_i &\le k+1, \quad i\in[2n],\\ 
n   &\le 2k+3. 
\end{align*}
In~the case $n = 2k+3$, there is only one minimal $k$-neigh\-bor\-ly reduced Gale diagram (see example \ref{ex:3}).
However, the diagram is not extremal. Thus, we are only interested in the cases 
\begin{equation}
n  \le 2k+2. \label{eq:UpN}
\end{equation}
On the other hand, $n \ge 2$.
In~the case $n = 2$, the only minimal $k$-neigh\-bor\-ly reduced Gale diagram has labels $m_i = k+1$, $i\in[4]$ (see example \ref{ex:1}).

\begin{theorem}[\cite{Marcus:1981}]
	\label{th:Marcus}
	The sum of labels of any minimal $k$-neigh\-bor\-ly reduced Gale diagram is not greater than $4(k+1)$.
	If the sum equals $4(k+1)$, then such Gale diagram is a $4$-gon with labels $m_1 = m_2 = m_3 = m_4 = k+1$.
\end{theorem}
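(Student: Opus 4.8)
The plan is to work entirely on the combinatorial side, with the $2n$ labels $m_0,\dots,m_{2n-1}$ placed on $\mathbb{Z}_{2n}$. Write $S_i=\sum_{j=i+1}^{i+n-1}m_j$ for the weight of the open semicircle opposite the diameter through $i$ and $i+n$, and $M=\sum_j m_j$ for the total (so $M=d+3$ after the normalization $m(O)=0$). In this notation the $k$-neighborliness condition N reads $S_i\ge k+1$ for every $i$. Two observations follow immediately from minimality: if $m_j>0$ then some semicircle through $j$ must be \emph{tight}, i.e.\ equal to $k+1$ (otherwise $m_j$ could be decreased without violating N), and since $m_j$ is one nonnegative summand of that tight sum, this already forces $m_j\le k+1$ for every label.

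The engine of the proof is the identity $M=(m_i+m_{i+n})+S_i+S_{i+n}$, valid for each diameter $i$ because the two opposite open semicircles together with the two poles partition the $2n$-gon. Since $S_i,S_{i+n}\ge k+1$, this gives $m_i+m_{i+n}\le M-2(k+1)$ for every $i$, with equality exactly when both opposite semicircles are tight. The theorem therefore reduces to a single claim: some diameter is \emph{saturated}, meaning $S_i=S_{i+n}=k+1$. Granting this, and using $m_i,m_{i+n}\le k+1$, we get $M=(m_i+m_{i+n})+2(k+1)\le 4(k+1)$, which is the desired bound.

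The existence of a saturated diameter is where I expect the genuine work to be, and I would argue it by contradiction. If no diameter is saturated then the set $Z$ of tight semicircles contains no antipodal pair. By minimality the tight semicircles cover every positive label (vertex $j$ is covered by semicircle $i$ precisely when $i\in\{j-n+1,\dots,j-1\}$), so any vertex missed by all of $Z$ must carry label $0$. I would then show that the uncovered set cannot simultaneously avoid a pair of adjacent vertices and a pair of opposite vertices; combined with P3 (no two adjacent zeros) and P2 (no identically-zero diameter) this gives the contradiction. The delicate point, and the main obstacle, is that an antipodal-pair-free $Z$ need not lie inside a semicircle of indices, so a pure covering count does not close the argument by itself; the extra leverage I would use is the averaging identity $\sum_i S_i=(n-1)M$ together with parity and integrality constraints on the $S_i$, which force enough semicircles to be tight that an antipodal tight pair becomes unavoidable.

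Finally I would settle the equality case. If $M=4(k+1)$ then a saturated diameter $i$ has $m_i=m_{i+n}=k+1$ and $S_i=S_{i+n}=k+1$. Applying minimality to the pole $m_i=k+1$: it must lie in some tight semicircle, whose total $k+1$ is already exhausted by $m_i$ alone, so the remaining $n-2$ labels of that semicircle vanish. Chasing these forced zeros around the circle—using that each open semicircle opposite the saturated diameter carries weight exactly $k+1$—produces, as soon as $n\ge 3$, a diameter with both labels $0$, contradicting P2. Hence $n=2$, and the $4$-gon with $m_1=m_2=m_3=m_4=k+1$ is the unique extremal diagram, as claimed.
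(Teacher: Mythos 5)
The paper itself offers no proof of this statement---it is imported wholesale from Marcus (1981)---so the only question is whether your self-contained argument works, and it does not. Your preliminary steps are fine: the identity $M=(m_i+m_{i+n})+S_i+S_{i+n}$, the bound $m_j\le k+1$ from minimality, and the observation that a \emph{saturated} diameter (one with $S_i=S_{i+n}=k+1$) would force $M\le 4(k+1)$. But the entire proof funnels through the claim that every minimal $k$-neighborly diagram contains a saturated diameter, and that claim is false. Take $n=4$, $k=3$, and labels $(m_0,\dots,m_7)=(0,2,2,0,3,1,1,3)$. The eight open-semicircle sums $S_i=m_{i+1}+m_{i+2}+m_{i+3}$ are $4,5,4,5,5,4,5,4$, so the diagram is $3$-neighborly and satisfies P2 and P3; decreasing any positive label drops some $S_i$ to $3$, so it is minimal; yet the tight semicircles are $S_0,S_2,S_5,S_7$ while their antipodes $S_4,S_6,S_1,S_3$ all equal $5$, so no diameter is saturated. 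Your own sketch for producing a saturated diameter collapses on exactly this example: the four tight semicircles cover all eight vertices, so the ``uncovered vertices must carry label $0$'' mechanism produces nothing, and the appeal to $\sum_i S_i=(n-1)M$ together with unspecified ``parity and integrality constraints'' is a placeholder, not an argument. (Here $M=12<16=4(k+1)$, so the theorem's conclusion still holds---saturation is only a sufficient condition for your bound, not a necessary one---but your method gives no way to see it.)

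Consequently the proposal proves the inequality only for the subclass of minimal diagrams that happen to possess a saturated diameter, and the equality analysis inherits the same defect since it opens with ``a saturated diameter $i$ has $m_i=m_{i+n}=k+1$.'' To salvage the approach you would need either to prove that a minimal diagram \emph{maximizing} the label sum has a saturated diameter (a different and still unproven statement), or to supply a separate counting argument for the unsaturated case; as written the core of the theorem remains unproved, and absent such an argument you should do what the paper does and cite Marcus.
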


Now we are ready to start the proof of Theorem~\ref{th:d3}.
It consists of the following steps.
First of all, we show that $m_i + m_{i-1} \ge 2$, $i \in [2n]$, for an extremal Gale diagram
(Lemmas \ref{lem:ShiftZero}--\ref{lem:NonZero}).
It gives us the possibility for enumerating all extremal Gale diagrams for $k \le 6$ with the help of a computer (Proposition~\ref{pr:k6}).
The case $k \ge 6$ is analyzed in the section~\ref{sec:kg6}.

%
%

\section{Local properties of extremal Gale diagrams}


\begin{lemma} 
	\label{lem:ShiftZero}
	Let $m_i$ be a positive label 
	in a $k$-neigh\-bor\-ly reduced Gale diagram with $n$ diameters.
	In~addition, let $m_{i+n} = 0$ and $m_{i+n-1} > 0$ (see fig.~\ref{fig:shift1}).
	Then the \emph{displacement operation}
	$$
	m_i := m_i - 1 \quad \text{and} \quad m_{i-1} := m_{i-1} + 1
	$$
	reduces the total number of facets by at least $k$.
	Moreover, the new Gale diagram will be $k$-neigh\-bor\-ly
	whenever $\sum_{j = i}^{i+n-2} m_j \ge k+2$.
\end{lemma}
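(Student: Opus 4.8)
The plan is to track $f_{d-1}(P)$ through the displace operation using property~F, which identifies $f_{d-1}(P)$ with the number of cofacets of $D$. Since we assume $m(O)=0$, there is no center cofacet, so
\[
  f_{d-1}(P) \;=\; \sum_{p=1}^{n} m_p m_{p+n} \;+\; \sum_{\{a,b,c\}} m_a m_b m_c,
\]
where the first sum runs over the $n$ diameters and the second over all triples of positions whose triangle contains $O$ in its (relative) interior. I would compute the increment $\Delta$ of this quantity under $m_i:=m_i-1$, $m_{i-1}:=m_{i-1}+1$, splitting $\Delta$ into a diameter part and a triangle part. For the diameter part, only $\{i,i+n\}$ and $\{i-1,i+n-1\}$ are touched; since $m_{i+n}=0$ the term $m_i m_{i+n}$ stays $0$, while $m_{i-1}m_{i+n-1}$ grows by $m_{i+n-1}$, so the diameter part of $\Delta$ equals exactly $+m_{i+n-1}$.

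The triangle part is the crux. Writing $A_p:=\sum_{\{p,b,c\}}m_b m_c$ for the weighted number of center-triangles through position $p$, the first thing to record is that \emph{no} center-triangle can use the two adjacent positions $i-1$ and $i$ at once: the arc between them is a single step, which forces one of the two remaining arcs to span at least a semicircle. Consequently the quadratic cross term vanishes and the triangle part of $\Delta$ is simply $A_{i-1}-A_i$. Next I would make $A_p$ explicit from the criterion that a triangle contains $O$ iff each of its three arcs is shorter than a semicircle; ordering the two companions of $p$ by clockwise distance gives
\[
  A_p \;=\; \sum_{s=2}^{n-1} m_{p+s} \sum_{u=1}^{s-1} m_{p+n+u},
\]
all indices read modulo $2n$. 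Substituting $p=i$ and $p=i-1$ and using $m_{i+n}=0$ to absorb the boundary term of the shifted inner sum, every summand cancels except the top one, leaving the clean identity
\[
  A_i - A_{i-1} \;=\; m_{i+n-1}\bigl(m_{i+n+1}+m_{i+n+2}+\dots+m_{i-2}\bigr).
\]
I expect this cancellation to be the main obstacle: writing the formula for $A_p$ correctly and verifying that the two shifted sums telescope to the single term $s=n-1$ is where the bookkeeping must be done with care.

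Putting the two parts together yields $\Delta = m_{i+n-1}\bigl(1-\Sigma\bigr)$, where $\Sigma := m_{i+n+1}+\dots+m_{i-2}$. The decisive final step is to recognize $\Sigma$ as a neighborliness sum: because $m_{i+n}=0$ we have $\Sigma=\sum_{j=i+n}^{i+2n-2}m_j$, which is exactly the open semicircle sum attached to the index $l=i+n-1$ and is therefore $\ge k+1$ by property~N. Hence
\[
  \Delta \;=\; -\,m_{i+n-1}\,(\Sigma-1) \;\le\; -\,m_{i+n-1}\,k \;\le\; -k,
\]
using $m_{i+n-1}\ge 1$, which establishes the first assertion (in fact a stronger bound by the factor $m_{i+n-1}$).

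For the second assertion I would examine the $2n$ open semicircle sums $\sum_{j=l+1}^{l+n-1}m_j$ directly. Exactly one of them contains $i$ but not $i-1$, namely $l=i-1$ (the window $\{i,\dots,i+n-2\}$), and exactly one contains $i-1$ but not $i$ (the window $\{i-n+1,\dots,i-1\}$); every other window contains both or neither and is unchanged. Only the former decreases, and by $1$, so condition~N is preserved by the operation precisely when $\sum_{j=i}^{i+n-2}m_j\ge k+2$, as claimed.
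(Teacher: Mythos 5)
Your proof is correct and follows essentially the same route as the paper's: the net change in the number of cofacets comes down to the gain of $m_{i+n-1}$ diameter cofacets $\{i-1,\,i+n-1\}$ against the loss of the triangular cofacets $\{i,\,i+n-1,\,j\}$ with $j\in[i+n+1,\,i-2]$, bounded below by $k$ via property N, and the semicircle bookkeeping for the second claim is identical. The only difference is presentational: you derive the net change by explicitly telescoping the weighted triangle counts $A_i-A_{i-1}$, whereas the paper states the lost and gained cofacets directly.
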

\begin{figure}
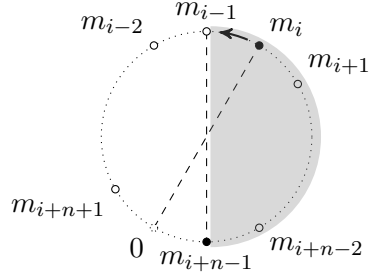

	\centering
	\picShift
	\caption{The displacement operation. 
		The sum of lables in the gray semicircle must be greater than $k+2$ 
		for preserving $k$-neighborliness.}\label{fig:shift1}
\end{figure}
\begin{proof}
	After such displacing, there will be lost
	\[
	m_{i+n-1} \sum_{j = i+n+1}^{i-2} m_j
	\]
	cofacets of the form $\{i, i+n-1, j\}$, where $j \in [i+n+1, i-2]$.
	At the same time, there will appear $m_{i+n-1}$ new cofacets of the form $\{i-1, i+n-1\}$.
	Note that 
	\[
	m_{i+n-1} > 0 \quad \text{and} \quad \sum_{j = i+n+1}^{i-2} m_j = \sum_{j = i+n}^{i-2} m_j \ge k+1.
	\]
	Therefore, after this operation, the total number of cofacets will be reduced at least~by
	\[
	m_{i+n-1} (k+1 - 1) \ge k.
	\]
	
	The displacement operation reduces the sum $\sum_{j = i}^{i+n-2} m_j$ (the gray semicircle in the~fig.~\ref{fig:shift1}) by 1.
	The sums in other half-planes are not reduced.
	If the inequality $\sum_{j = i}^{i+n-2} m_j \ge k+2$ is fulfilled for the source diagram,
	then the displaced diagram is $k$-neigh\-bor\-ly.
\end{proof}

A reduced Gale diagram is called \emph{simplicial}
if the appropriate polytope is a simplicial one.

\begin{lemma} 
	\label{lem:Simplicial}
	Simplicial Gale diagram is not extremal.
\end{lemma}
\begin{proof}
	As it was remarked in \eqref{eq:UpN}, 
	an extremal $k$-neigh\-bor\-ly Gale diagram has no more than $2k+2$ diameters.
	By property~S from Theorem~\ref{thm:GaleProperties}, each diameter of a simplicial reduced Gale diagram has exactly
	one positive label.
	Without loss of generality, we suppose that $m_{2i} = 0$ and $m_{2i-1} > 0$ for every $i\in[n]$
	(remember the property P3).
	Hence, $n$ is odd. That is 
	$$
	n \in [3, 2k+1].
	$$
	Thus, at least one label of this diagram is greater than~1.
	Let us suppose, that $m_t > 1$ for some odd $t \in [2n]$.
	More precisely, there are at least three such labels,
	since $\sum_{i = t+1}^{t+n-1} m_i \ge k+1$ and $\sum_{i = t-n+1}^{t-1} m_i \ge k+1$.
	
	\begin{figure}
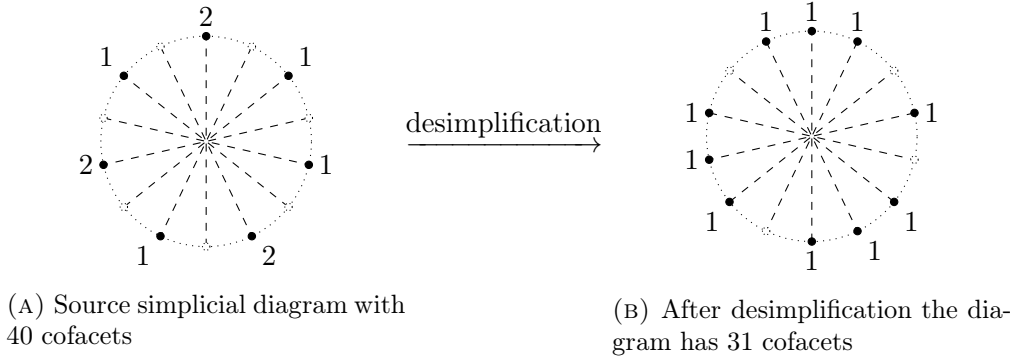

		\centering
		\begin{minipage}[c]{.33\linewidth}
			\centering\picSimplicialA
			\subcaption{Source simplicial diagram with 40 cofacets}\label{fig:desim1}
		\end{minipage}%
		\raisebox{1.5\baselineskip}{$\underrightarrow{\text{desimplification}}$}
		\begin{minipage}[c]{.33\linewidth}
			\centering\picSimplicialB
			\subcaption{After desimplification the~diagram has 31 cofacets}\label{fig:desim2}
		\end{minipage}
		\caption{Desimplification}\label{fig:Desimplification}
	\end{figure}
	
	Let us consider the following procedure.
	We call it \emph{desimplification} (see fig.~\ref{fig:Desimplification}).
	All positive labels are reduced by~1, 
	zero labels (except $m(O)$) are incremented by~1.
	It is clear, that the new diagram will keep $k$-neighborliness
	and will become nonsimplicial (since at least one label was greater than~1).
	
	It remains to show that the number of cofacets is reduced after desimplification.
	To do this we divide all $v$ points on \emph{movable} and \emph{static}, where $v = d+3$.
	There are exactly $n$ movable and $v-n$ static points.
	For the sake of convenience, we suppose that every movable point
	is moved from its position $2i-1$ to the position $2i$.
	All static points remain in their positions with odd indices.
	
	By property S, every cofacet of the source diagram consists of 3 points (see fig.~\ref{fig:MMS}).
	The number of cofacets consisted only of static points is not changed after desimplification.
	The same is true for the cofacets consisted of movable points.
	It remains to estimate the change of the number of cofacets of types $\{m, m, s\}$, $\{m, s, s\}$, and $\{m, s\}$, where $m$ is a movable and $s$ is a static point.
	\begin{figure}
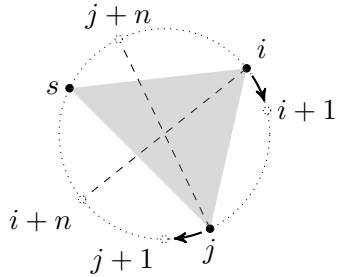

		\centering
		\picMMS
		\caption{A cofacet $\{i, j, s\}$.}\label{fig:MMS}
	\end{figure}
	
	Let us consider a cofacet of the type $\{m, m, s\}$.
	More precisely, we consider cofacets $\{i, j, s\}$, 
	where $i$, $j$, $s$ are odd (see fig.~\ref{fig:MMS}).
	Without loss of generality, we suppose that 
	\[
	i < j < i + n \quad\text{and}\quad i+n < s < j+n.
	\]
	After desimplification, $i$ and $j$ are moved to $i+1$ and $j+1$.
	They may form cofacets of the type $\{m, m, s\}$ 
	with static points $s'\in[i+n+2, j+n]$.
	But $i+n+2$ and $j+n$ are even and have no static points.
	Thus $s' \in [i+n+3, j+n-1]$ and we lose cofacet(s) $\{i, j, i+n+1\}$
	(if there are static points in the position $i+n+1$).
	Therefore, for every static point~$s$, there are lost $(n-1)/2$ cofacets
	with $i = s-n-1$ and $j \in [i+2, i+n-1]$.
	Consequently, the total number of $\{m, m, s\}$-type is reduced by $(v-n)(n-1)/2$.
	
	Continuing in the same way, it is easy to see that the number of $\{m, s, s\}$-type cofacets
	is reduced by 1 for every pair of distinguished static points.
	Recall that we have at least 3 such points.
	
	The number of $\{m, s\}$-type cofasets is equal to 0 for the source diagram
	and it is equal to the number $v-n$ of static points after desimplification.
	
	Therefore, the total number of cofacets is reduced by at least $3 + (v-n)(n-3)/2$
	after desimplification.
\end{proof}

A diameter $\{m_i, m_{i+n}\}$ is called \emph{complete} if $m_i > 0$ and $m_{i+n} > 0$.
Otherwise, $\{m_i, m_{i+n}\}$ is called \emph{incomplete}.
From the just proved Lemma~\ref{lem:Simplicial}, 
it follows that an extremal reduced Gale diagram has at least one complete diameter.

Two diameters $\{m_i, m_{i+n}\}$ and $\{m_{i+1}, m_{i+1+n}\}$ are called \emph{consecutive}.

\begin{lemma} 
	\label{lem:Dpairs}
	An extremal $k$-neigh\-bor\-ly Gale diagram has no consecutive incomplete diameters.
\end{lemma}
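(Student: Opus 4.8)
The plan is to argue by contradiction using the \emph{displace operation} of Lemma~\ref{lem:ShiftZero}. That operation keeps the sum of all labels (hence $f_0$) fixed while deleting at least $k \ge 2$ cofacets, so applying it once to an extremal diagram would strictly decrease $f_{d-1}-f_0$; showing that it can always be applied is exactly the contradiction we need.

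First I would record the global structure forced by extremality. Since we have normalized $m(O)=0$, property \textbf{S} says the diagram is simplicial precisely when every diameter is incomplete; as an extremal diagram is nonsimplicial by Remark~\ref{rem:simplicial}, at least one diameter is complete, so not all diameters can be incomplete. Consequently, if two consecutive incomplete diameters existed, they would lie in a maximal \emph{arc} of consecutive incomplete diameters that is a proper sub-arc of the cycle of diameters, bounded on each side by a complete diameter.

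Next I would zoom in on the clockwise-first incomplete diameter of such an arc, say at position $p$, whose predecessor (the diameter at position $p-1$) is complete; the arc having length at least two, the diameter at $p+1$ is again incomplete. After, if necessary, flipping the whole diagram through its centre (the rotation $m_j \leftrightarrow m_{j+n}$, which changes neither the neighborliness, the number of cofacets, nor the sum of labels), I may assume $m_p=0$, so that $m_{p+n}>0$ by \textbf{P2}. Property \textbf{P3} then forbids a second zero adjacent to $m_p$, giving $m_{p+1}>0$; since the diameter at $p+1$ is incomplete, this forces $m_{p+1+n}=0$. Completeness of the diameter at $p-1$ gives in particular $m_{p+n-1}>0$. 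These are exactly the hypotheses of Lemma~\ref{lem:ShiftZero} applied with index $p+1$ (the roles $m_i\mapsto m_{p+1}$, $m_{i+n}\mapsto m_{p+1+n}=0$, $m_{i+n-1}\mapsto m_{p+n}>0$), so the displace $m_{p+1}\mapsto m_p$ removes at least $k$ cofacets.

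The only genuine point that remains is to check that this displace preserves $k$-neighborliness; by Lemma~\ref{lem:ShiftZero} it suffices that $\sum_{j=p+1}^{p+n-1} m_j \ge k+2$. Here I would exploit the vanishing label $m_p=0$ inside a \emph{shifted} semicircle: condition \textbf{N} applied to the open semicircle of the diameter at $p-1$ gives $\sum_{j=p}^{p+n-2} m_j \ge k+1$, and since $m_p=0$ this reads $\sum_{j=p+1}^{p+n-2} m_j \ge k+1$; adding the strictly positive term $m_{p+n-1}$ supplied by the complete diameter at $p-1$ yields $\sum_{j=p+1}^{p+n-1} m_j \ge k+1+m_{p+n-1}\ge k+2$. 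Thus the displaced diagram is still $k$-neighborly (and \textbf{P2}, \textbf{P3} may be restored afterwards by standard operations, which alter neither count), so it has the same number of vertices but strictly fewer facets than the supposedly extremal diagram, a contradiction. I expect the main difficulty to be precisely the index bookkeeping that places the zero label $m_p$ in a semicircle bordering the complete diameter, since it is this adjacency that furnishes the extra slack $m_{p+n-1}>0$ that keeps the displace admissible; once the maximal-arc reduction is set up, the rest is a one-line computation.
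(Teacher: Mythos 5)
Your proof is correct and follows essentially the same route as the paper: locate a complete diameter adjacent to the pair of incomplete ones, use condition N together with the zero label and the positive opposite label to get the $k+2$ slack, and apply the displace operation of Lemma~\ref{lem:ShiftZero} to contradict extremality. Your maximal-arc plus central-flip normalization merely replaces the paper's two symmetric cases (the second of which it dispatches ``by analogy''), so the substance is identical.
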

\begin{figure}
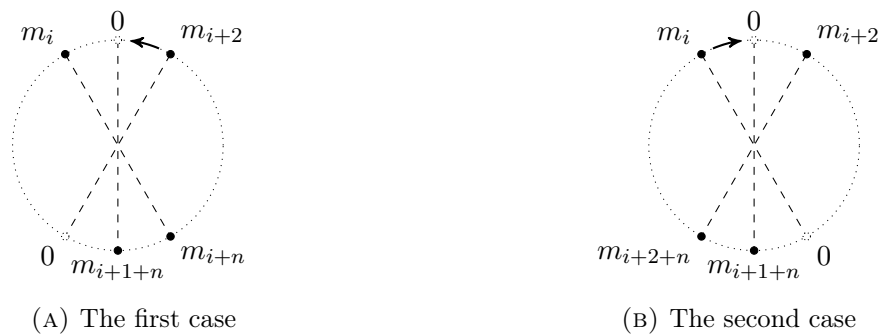

	\begin{minipage}[t]{.5\linewidth}
		\centering\picConsecutiveA
		\subcaption{The first case}\label{fig:Consecutive1}
	\end{minipage}%
	\begin{minipage}[t]{.5\linewidth}
		\centering\picConsecutiveB
		\subcaption{The second case}\label{fig:Consecutive2}
	\end{minipage}
	\caption{Two consecutive incomplete diameters}\label{fig:Consecutive}
\end{figure}
\begin{proof}
	Suppose to the contrary that some extremal $k$-neigh\-bor\-ly Gale diagram $D$ has consecutive incomplete diameters.
	From Lemma~\ref{lem:Simplicial},
	we know that $D$ has at least one complete diameter.
	Hence, there are three diameters $\{m_i, m_{i+n}\}$, $\{m_{i+1}, m_{i+1+n}\}$, $\{m_{i+2}, m_{i+2+n}\}$
	such that one of the two conditions is satisfied (see fig.~\ref{fig:Consecutive}):
	\begin{enumerate}
		\item The first diameter is complete and the last two are incomplete.
		\item The last diameter is complete and the first two are incomplete.
	\end{enumerate}
	
	We examine only the first case. The second case is examined by analogy.
	
	By the condition N from Theorem~\ref{thm:GaleProperties}, 
	$$
	\sum_{j=i+1}^{i+n-1} m_j \ge k+1.
	$$
	Since $m_{i+1} = 0$ and $m_{i+n} > 0$, we get 
	$$
	\sum_{j=i+2}^{i+n} m_j \ge k+2.
	$$
	From Lemma~\ref{lem:ShiftZero} (see also fig.~\ref{fig:shift1}),
	it follows that the displacement operation
	$$
	m_{i+2} := m_{i+2} - 1 \quad \text{and} \quad m_{i+1} := m_{i+1} + 1
	$$
	reduces the total number of facets by at least $k$.
	Moreover, the new Gale diagram will be $k$-neigh\-bor\-ly.
	Therefore, the source diagram $D$ is not extremal.
\end{proof}


\begin{lemma} 
	\label{lem:NonZero}
	For an extremal $k$-neigh\-bor\-ly Gale diagram, $m_i + m_{i-1} \ge 2$, $i \in [2n]$.
\end{lemma}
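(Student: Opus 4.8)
The plan is to argue by contradiction: assuming an extremal diagram $D$ has $m_i + m_{i-1} = 1$ for some $i$, I will produce a $k$-neigh\-bor\-ly diagram with the same number of vertices but strictly fewer cofacets, contradicting extremality. By property~P3 we always have $m_i + m_{i-1} \ge 1$, so the only situation to exclude is $m_i + m_{i-1} = 1$, i.e.\ one of the two labels is $0$ and the other is $1$. Reflecting the $2n$-gon reverses the cyclic order while preserving P1--P4, condition~N, the sum of labels, and the set of cofacets (hence extremality), and it interchanges the two patterns $(m_{i-1}, m_i) = (1,0)$ and $(0,1)$; so I may assume $m_{i-1} = 1$ and $m_i = 0$.

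Next I would set up the operation. Since $m_i = 0$, property~P2 forces $m_{i+n} > 0$, so the diameter $\{m_i, m_{i+n}\}$ is incomplete; by Lemma~\ref{lem:Dpairs} the consecutive diameter $\{m_{i+1}, m_{i+n+1}\}$ is then complete, so in particular $m_{i+n+1} > 0$. I would now apply the displace operation of Lemma~\ref{lem:ShiftZero} to the index $i+n$: its hypotheses $m_{i+n} > 0$, $m_{(i+n)+n} = m_i = 0$, and $m_{(i+n)+n-1} = m_{i-1} = 1 > 0$ all hold, so setting $m_{i+n} := m_{i+n} - 1$ and $m_{i+n-1} := m_{i+n-1} + 1$ reduces the number of cofacets by at least $k$. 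This operation leaves the sum of labels (hence $f_0$ and the dimension $d$) unchanged, so it strictly decreases $f_{d-1} - f_0$; if the result is still $k$-neigh\-bor\-ly, extremality is contradicted.

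The main obstacle is precisely the $k$-neigh\-bor\-li\-ness of the displaced diagram: Lemma~\ref{lem:ShiftZero} guarantees it only when $\sum_{l=i+n}^{i-2} m_l \ge k+2$, whereas condition~N yields this open half-sum only $\ge k+1$. The idea that closes this one-unit gap is a two-step sliding argument that lets the window pass over the two special labels $m_i = 0$ and $m_{i-1} = 1$. Starting from the open semicircle $\{i+n+2, \dots, i\}$, whose sum is $\ge k+1$ by~N, sliding the window down by one replaces the label $m_i = 0$ by $m_{i+n+1} \ge 1$ (a strict gain), and sliding once more replaces $m_{i-1} = 1$ by $m_{i+n} \ge 1$ (no loss); concretely,
\[
  \sum_{l=i+n}^{i-2} m_l = \sum_{l=i+n+2}^{i} m_l + (m_{i+n+1} - m_i) + (m_{i+n} - m_{i-1}) \ge (k+1) + 1 + 0 = k+2.
\]
Thus the displaced diagram is $k$-neigh\-bor\-ly, which contradicts the extremality of $D$ and proves the lemma. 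One routine point to check is that the displace may drop $m_{i+n}$ to $0$, making the diameter $\{m_i, m_{i+n}\}$ all-zero; this is harmless, since the standard operation~D then removes it without affecting the cofacet count or the $k$-neigh\-bor\-li\-ness.
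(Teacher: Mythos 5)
Your proposal is correct and follows essentially the same route as the paper: reduce to the case $(m_{i-1},m_i)=(1,0)$ (the paper lists the two symmetric cases via Lemma~\ref{lem:Dpairs} rather than invoking a reflection), show that the open semicircle $\{i+n,\dots,i-2\}$ has label sum at least $k+2$ because $m_{i+n}+m_{i+n+1}\ge 2 > 1 = m_{i-1}+m_i$, and then apply the displace operation of Lemma~\ref{lem:ShiftZero} at the index $i+n$ to contradict extremality. No gaps.
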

\begin{figure}
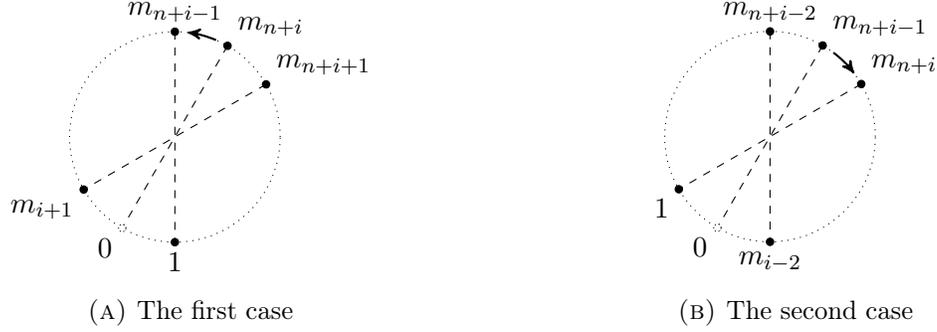

	\begin{minipage}[t]{.5\linewidth}
		\centering\picSumTwoA
		\subcaption{The first case}\label{fig:SumTwo1}
	\end{minipage}%
	\begin{minipage}[t]{.5\linewidth}
		\centering\picSumTwoB
		\subcaption{The second case}\label{fig:SumTwo2}
	\end{minipage}
	\caption{The sum of two consecutive labels cannt be less than 2.}\label{fig:SumTwo}
\end{figure}
\begin{proof}
	Suppose to the contrary that there is $i\in[2n]$ such that $m_i + m_{i-1} = 1$.
	(By the property P3, $m_i + m_{i-1} > 0$.)
	Hence one of two diameters $\{m_i, m_{n+i}\}$ and $\{m_{i-1}, m_{n+i-1}\}$ is incomplete.
	By Lemma~\ref{lem:Dpairs}, there are two symmetrical cases (see fig.~\ref{fig:SumTwo}):
	\begin{enumerate}
		\item $m_{i-1} = 1$, $m_{i} = 0$, $m_{i+1} > 0$, $m_{n+i-1} > 0$, $m_{n+i} > 0$, $m_{n+i+1} > 0$.
		\item $m_{i-1} = 0$, $m_{i} = 1$, $m_{i-2} > 0$, $m_{n+i-2} > 0$, $m_{n+i-1} > 0$, $m_{n+i} > 0$.
	\end{enumerate}
	
	We examine only the first case.
	
	By the condition N, 
	$$
	\sum_{j=n+i+2}^{2n+i} m_j \ge k+1.
	$$
	Since $m_{n+i} + m_{n+i+1} > m_{2n+i-1} + m_{2n+i}$, we get 
	$$
	\sum_{j=n+i}^{2n+i-2} m_j \ge k+2.
	$$
	From Lemma~\ref{lem:ShiftZero},
	it follows that the displacement operation
	$$
	m_{n+i} := m_{n+i} - 1 \quad \text{and} \quad m_{n+i-1} := m_{n+i-1} + 1
	$$
	reduces the total number of facets by at least $k$.
	Moreover, the new Gale diagram will be $k$-neigh\-bor\-ly.
	Therefore, the source diagram $D$ is not extremal.
\end{proof}

\begin{lemma}
	\label{pr:MaxL}
	Consider an extremal $k$-neigh\-bor\-ly Gale diagram.
	Let $i \in [2n]$ and $q \ge 0$. 
	If
	\begin{equation}
	\label{eq:UpBoundM}
	\sum_{j = i+1-t}^{i+n-1-t} m_j \ge m_i + q \quad \text{for every $t\in[n-1]$}
	\end{equation}
	(in~other words, in any open half-plane containing $m_i$, 
	the sum of labels is not less than $m_i + q$),
	then $m_i \le k+1 - q$.
\end{lemma}
\begin{proof}
	If $m_i > k+1 - q$, then in any open half-plane containing $m_i$ the sum of labels is greater than $k+1$.
	Thus, the reducing $m_i := m_i - 1$ does not violate the property~N
	and the diagram is not minimal.
\end{proof}

\begin{lemma}
	\label{pr:UpB}
	For an extremal $k$-neigh\-bor\-ly Gale diagram with $n \ge 3$ diameters, 
	\begin{equation}
	\label{eq:MaxMi}
	m_i \le \begin{cases}
	k+5 - n & \text{if $n$ is even}, \\
	k+4 - n & \text{if $n$ is odd},
	\end{cases}
	\quad
	\text{and}
	\quad
	n \le \begin{cases}
	k+2 & \text{if $k$ is even}, \\
	k+3 & \text{if $k$ is odd}.
	\end{cases}
	\end{equation}
\end{lemma}

\begin{proof}
	From Lemma~\ref{lem:NonZero}, it follows that the inequality \eqref{eq:UpBoundM}
	holds for every $i \in [2n]$ and
	$$
	q = \begin{cases}
	n-4 & \text{if $n$ is even}, \\
	n-3 & \text{if $n$ is odd}.
	\end{cases}
	$$
	By Lemma~\ref{pr:MaxL}, we get 
	$m_i \le k+5 - n$ for even $n$ and $m_i \le k+4 - n$ for odd $n$.
	
	This implies that $n \le k+3$ for odd $k$.
	
	Now suppose that $k$ is even. Hence $n \le k+4$.
	Note also that $m_i \le 1$ for every $i\in[2n]$ and $n\in\{k+3, k+4\}$.
	By Lemma~\ref{lem:NonZero}, we get $m_i = 1$ in this case.
	Thus, by removing 1 or 2 diameters, we can transform such a diagram to one in the example~\ref{ex:2}.
	Therefore, $n \le k+2$ for an extremal Gale diagram if $k$ is even.
\end{proof}

\begin{proposition}
	\label{pr:k6}
	$\Delta_3(2) = 4$, $\Delta_3(3) = 15$, 
	and the equalities are attained in the example~\ref{ex:2}.
	For $k\in \{4, 5, 6\}$, $\Delta_3(k) = 2 (k^2 - 1)$
	and the best difference is attained in the example~\ref{ex:1}.
\end{proposition}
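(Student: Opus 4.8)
The plan is to reduce the statement to a finite exhaustive search over reduced Gale diagrams. The examples already supply the upper bounds: example~\ref{ex:2} exhibits a $k$-neigh\-bor\-ly diagram with difference $(k+2)(k^2+k-3)/3$, and example~\ref{ex:1} one with difference $2(k^2-1)$; comparing the two values shows that the smaller is $(k+2)(k^2+k-3)/3$ for $k\in\{2,3\}$ and $2(k^2-1)$ for $k\in\{4,5,6\}$. Hence $\Delta_3(k)$ is at most the asserted quantity, and what remains is the matching lower bound: no minimal $k$-neigh\-bor\-ly diagram attains a smaller difference.

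The point that makes this feasible is that, for each fixed $k$, only finitely many diagrams need be inspected. By Proposition~\ref{pr:UpB} the number of diameters satisfies $n\le k+2$ for even $k$ and $n\le k+3$ for odd $k$, and each label is bounded by $m_i\le k+5-n$ (even $n$) or $m_i\le k+4-n$ (odd $n$); together with Theorem~\ref{th:Marcus}, which caps the sum of labels at $4(k+1)$, this confines the search to a small finite set. So for each $k\in\{2,3,4,5,6\}$ and each admissible $n$ I would enumerate all label vectors $(m_0,\dots,m_{2n-1})$ within these bounds that satisfy condition~N (every open semicircle sums to at least $k+1$) and the constraint $m_i+m_{i-1}\ge 2$ of Lemma~\ref{lem:NonZero}. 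Every diagram in this list is a genuine $k$-neigh\-bor\-ly diagram, so its difference is at least $\Delta_3(k)$; and since the minimum is attained on a minimal diagram, which necessarily meets all the listed necessary conditions, the extremal diagram itself appears in the list. Therefore the minimum of the difference over the list equals $\Delta_3(k)$ exactly.

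For each enumerated diagram I would compute the difference $f_{d-1}-f_0 = (\#\text{cofacets}) - \sum_i m_i$, using (with $m(O)=0$) the cofacet count
\[
 \sum_{i\in[n]} m_i\,m_{i+n} \;+\; \sum_{\{a,b,c\}} m_a\,m_b\,m_c,
\]
where the first sum ranges over the $n$ diameters and the second over all triples of $2n$-gon vertices whose triangle encloses $O$. Taking the minimum over the list for each $k$ and checking it against the two candidate values completes the verification; in each case the minimizer is, up to the symmetries of the diagram, example~\ref{ex:1} or example~\ref{ex:2}.

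The part requiring the most care is the justification that the search is genuinely exhaustive, rather than the arithmetic itself. One must be sure that the reductions made earlier — passing to $m(O)=0$, to minimal diagrams, and to diagrams obeying Lemma~\ref{lem:NonZero} and Proposition~\ref{pr:UpB} — discard no diagram capable of beating the examples, and that the combinatorial criterion deciding which triples $\{a,b,c\}$ have $O$ in the interior of their triangle (equivalently, no closed semicircle of the $2n$-gon contains all three vertices) is applied correctly in the cofacet count. Once these points are settled, the candidate sets are small enough to be enumerated to completion by computer for every $k\le 6$, and the stated values of $\Delta_3(k)$ follow.
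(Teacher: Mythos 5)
Your proposal is correct and follows essentially the same route as the paper: the paper's own proof likewise invokes condition~N, Theorem~\ref{th:Marcus}, Lemma~\ref{lem:NonZero}, and Proposition~\ref{pr:UpB} to confine the search to a finite set of diagrams and then determines $\Delta_3(k)$ for $k\le 6$ by computer enumeration. Your write-up merely spells out the bookkeeping (the cofacet-counting formula and the comparison with examples~\ref{ex:1} and~\ref{ex:2}) that the paper leaves implicit.
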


\begin{proof}
	The property N, Theorem~\ref{th:Marcus}, Lemmas~\ref{lem:NonZero} and~\ref{pr:UpB} impose strong restrictions.
	For small values of $k$, this allows us to look over all such diagrams and find $\Delta_3(k)$ by using a computer (see the CPP-code in Appendix~\ref{app:gale.cpp}).
	For $k \le 6$ it takes less than 1 minute.
\end{proof}



%
%

\section{\texorpdfstring{The case $k \ge 6$}{The case k > 5}}
\label{sec:kg6}

The goal of this section is to prove $\Delta_3(k) = 2 (k^2 - 1)$ for $k \ge 6$.
From Theorem~\ref{th:Marcus}, we know that the sum of labels 
of an extremal $k$-neigh\-bor\-ly Gale diagram $D$ is not greater than $4(k+1)$.
Thus, it is sufficient to show that the number of cofacets $f=f(D)$
cannot be less than 
$$
2(k^2-1) + 4(k+1) = 2(k+1)^2 \quad \text{for $k \ge 6$.}
$$
(On the other hand, this value is attained in the example~\ref{ex:1}.)

We partition the task into three cases:
\begin{description}
	\item[Lemma~\ref{lem:Ge1}] There is an open half-plane with only one positive label in the Gale diagram. See fig.~\ref{fig:lem1}.
	\item[Lemma~\ref{lem:Ge2}] There is an open half-plane with exactly two positive labels
	and one of them has value 1. See fig.~\ref{fig:lem2}.
	\item[Lemma~\ref{lem:Induction}] In~every open half-plane of the considered Gale diagram, there are at least two positive labels and if there are exactly two positive labels, then their values are not less than 2.
	This condition allows us to use Lemma~\ref{pr:MaxL} with $q=2$.
\end{description}

\begin{lemma} 
	\label{lem:Ge1}
	Let $D$ be an extremal $k$-neigh\-bor\-ly Gale diagram.
	If there is an open half-plane with only one positive label,
	then $f=f(D) \ge 2(k+1)^2$ for $k \ge 2$.
\end{lemma}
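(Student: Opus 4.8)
Let me set up coordinates adapted to the hypothesis. Write the distinguished semicircle so that its unique positive label sits at position $i$; by property N its labels sum to at least $k+1$, so $m_i\ge k+1$, and minimality of $D$ forces $m_i=k+1$. After a rotation I may assume $i=0$. Then every positive label other than $m_0$ lies in the complementary \emph{closed} semicircle $C$, whose two endpoints (the \emph{poles}) are antipodal; one of them lies in $\{1,\dots,n-1\}$, call its label $P$, the other in $\{n+1,\dots,2n-1\}$, call its label $Q$. Put $\alpha=\sum_{j=1}^{n-1}m_j$ and $\beta=\sum_{j=n+1}^{2n-1}m_j$, and let $m_n$ be the antipodal label of $m_0$.

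The heart of the argument is an exact cofacet count. Since all positive labels except $m_0$ lie in the closed semicircle $C$, no triangle cofacet can avoid the vertex $0$ (three points of a closed semicircle never enclose $O$), and the only diameter cofacet avoiding $0$ is the pole diameter, contributing $PQ$. The cofacets through $0$ are the diameter $\{0,n\}$ and the transversal triangles $\{0,a',b'\}$ with $a'$ strictly above and $b'$ strictly below the diameter $0$–$n$; each carries the multiplicity factor $m_0=k+1$. The transversal pairs are exactly $A\times B$ minus the single antipodal (hence degenerate) pole pair, so they number $\alpha\beta-PQ$. Thus
\[
 f=(k+1)m_n+(k+1)(\alpha\beta-PQ)+PQ=(k+1)m_n+(k+1)\alpha\beta-kPQ .
\]

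Property N now supplies three inequalities: the open semicircles starting at $0$ and at $n$ give $\alpha\ge k+1$ and $\beta\ge k+1$, while the open semicircle obtained from $C$ by deleting both poles gives the crucial relation $\alpha+\beta+m_n-P-Q\ge k+1$; minimality gives $P,Q\le k+1$. It remains to minimise $f$ under these constraints. Raising $\alpha$ by $\delta$ raises $(k+1)\alpha\beta$ by at least $(k+1)^2\delta$ while it can lower the forced value of $m_n$, hence $(k+1)m_n$, by at most $(k+1)\delta$; so $f$ is nondecreasing in $\alpha$, and symmetrically in $\beta$. Therefore the minimum occurs at $\alpha=\beta=k+1$ (where $P,Q\le k+1$ is automatic), the middle-semicircle relation becomes $m_n\ge\max\{0,\,P+Q-(k+1)\}$, and
\[
 f=(k+1)m_n+(k+1)^3-kPQ .
\]
A short case split on the sign of $P+Q-(k+1)$ then reduces $f\ge 2(k+1)^2$ to the elementary fact that the bilinear form $kPQ-(k+1)m_n$ is maximised over the relevant box at a corner; the extreme corner $P=Q=m_n=k+1$ gives equality and reproduces Example~\ref{ex:1}.

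The main obstacle is the cofacet bookkeeping, specifically the multiplicities: triangles through the heavy vertex are weighted by $k+1$ whereas the lone pole diameter is not, so neither the cofacets through $0$ alone (which yield only $(k+1)^2$) nor a crude product estimate suffices. What makes the bound tight is the middle-semicircle inequality $\alpha+\beta+m_n-P-Q\ge k+1$: it forces $m_n\ge k+1$ precisely when $P$ and $Q$ are as large as minimality permits, which is exactly the regime in which the $-kPQ$ term would otherwise be fatal.
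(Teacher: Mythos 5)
Your proof is correct, and it takes a genuinely different route from the one in the paper. The paper first observes that property P3 (no two adjacent zero labels) confines such a diagram to at most four diameters, writes out the five cofacet terms of the resulting octagon explicitly, and then finishes with product estimates plus a four-way case split on $m_2,m_4\in\{0,1\}$. You instead keep $n$ general, and your exact identity
\[
 f=(k+1)m_n+(k+1)\alpha\beta-kPQ
\]
is valid: since all positive labels other than the heavy one lie in the closed semicircle between the poles, every above/below pair $(a',b')$ satisfies $b'-a'\le n$ with equality only for the pole pair, so the transversal triangles through the heavy vertex are indeed counted by $\alpha\beta-PQ$, and the pole diameter is the unique cofacet missing that vertex. (I checked that your formula reproduces the paper's five-term count on the octagon.) The three N-inequalities you extract, together with $P,Q\le k+1$ from minimality, do suffice: the monotonicity-in-$\alpha$ step is sound because $(k+1)\beta\ge(k+1)^2$ dominates the at most $(k+1)$ per unit loss in the forced value of $m_n$, and the corner analysis of $kPQ-(k+1)\max\{0,P+Q-(k+1)\}$ over $[0,k+1]^2$ closes both cases for $k\ge 2$, with equality exactly at the configuration of Example~\ref{ex:1}. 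What your version buys is uniformity (no appeal to P3 to shrink the polygon, no case split on small labels) and a transparent identification of the equality case; what the paper's version buys is concreteness --- every estimate is a visible product of labels in a single small figure --- at the cost of four ad hoc subcases. If you write this up, do state explicitly that $m(O)=0$ (a standing normalization in this section) and spell out the inequality $b'-a'\le n$ that justifies the ``all pairs except the pole pair'' claim, since that is the one place where the argument could silently go wrong for general $n$.
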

\begin{figure}
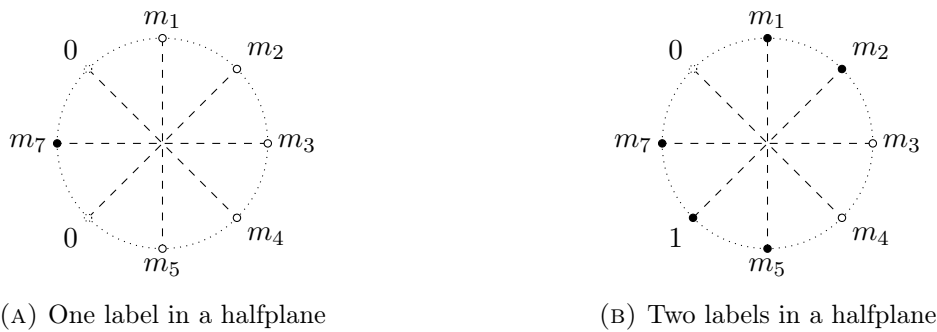

	\begin{minipage}[t]{.5\linewidth}
		\centering\picHalfA
		\subcaption{One label in a halfplane}\label{fig:lem1}
	\end{minipage}%
	\begin{minipage}[t]{.5\linewidth}
		\centering\picHalfB
		\subcaption{Two labels in a halfplane}\label{fig:lem2}
	\end{minipage}
	\caption{Two simple cases}\label{fig:2lemmas}
\end{figure}
\begin{proof}
	The general case is shown in the fig.~\ref{fig:lem1}.
	By assumption, $m_6 = m_8 = 0$.
	For convenience, we use the notation
	$$
	p = k+1.
	$$
	First note that $m_7 = p$, since the diagram is extremal and $k$-neigh\-bor\-ly.
	Some of the other labels may equal zero and the number of diameters may be less than 4 (and cann't be greater than 4, due to the property P3).
	By the property N,
	\begin{equation}
	\label{eq:LGe1}
	m_1 + m_2 \ge p, \quad m_4 + m_5 \ge p, \quad m_2 + m_3 + m_4 \ge p.
	\end{equation}
	Let us count the number of cofacets:
	$$
	f = m_1 m_5 + m_3 m_7 + m_1 m_4 m_7 + m_2 m_4 m_7 + m_2 m_5 m_7.
	$$
	Note that
	$$
	f \ge m_1 m_4 m_7 + m_2 m_4 m_7 \ge m_4 p^2 \quad \text{and} \quad
	f \ge m_2 m_4 m_7 + m_2 m_5 m_7 \ge m_2 p^2.
	$$
	Consequently, if $m_2 \ge 2$ or $m_4 \ge 2$, then $f \ge 2 p^2 = 2 (k+1)^2$.
	It remains to analyze four cases:
	\begin{enumerate}
		\item If $m_2 = m_4 = 0$, then, according to \eqref{eq:LGe1}, we have
		$m_1 \ge p$, $m_5 \ge p$, $m_3 \ge p$. Hence $f = m_1 m_5 + m_3 m_7 \ge 2 p^2$.
		\item If $m_2 = 0$ and $m_4 = 1$, then $m_1 \ge p$, $m_5 \ge p-1$, $m_3 \ge p-1$.
		Hence $f \ge p(p-1) + (p-1)p + p^2 = 3 p^2 - 2p \ge 2 p^2$ for $p\ge 2$.
		\item The case $m_2 = 1$ and $m_4 = 0$ is analysed by analogy with the previous one.
		\item If $m_2 = m_4 = 1$, then $m_1 \ge p-1$, $m_5 \ge p-1$, $m_3 \ge p-2$.
		Therefore $f \ge (p-1)^2 + (p-2)p + (p-1)p + p + (p-1)p = 4 p^2 - 5p + 1 > 2 p^2$ for $p\ge 3$. \qed
	\end{enumerate}
	\renewcommand{\qed}{}\end{proof}

\begin{lemma} 
	\label{lem:Ge2}
	Let $D$ be an extremal $k$-neigh\-bor\-ly Gale diagram.
	If there is an open half-plane with exactly two positive labels
	and the value of one of them is equal to~1,
	then $f=f(D) \ge 2(k+1)^2$ for $k \ge 4$.
\end{lemma}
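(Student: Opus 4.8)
The plan is to follow the proof of Lemma~\ref{lem:Ge1} as closely as possible: normalize the picture, read lower bounds on the labels off the property~N, write the number of cofacets as an explicit polynomial in the labels, and finish with a short case distinction on the small labels. As there, I write $p = k+1$. First I would cut down the number of diameters. Let $S$ be the open semicircle carrying exactly two positive labels, one of which is~$1$. The $n-3$ vanishing labels inside $S$ are pairwise non-adjacent by Lemma~\ref{lem:NonZero}, so $n\le 6$; and for $n\in\{5,6\}$ each of the two positive labels of $S$ is adjacent (inside $S$) to a zero, so Lemma~\ref{lem:NonZero} forces both of them to be $\ge 2$, contradicting the hypothesis. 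Hence $n\le 4$. For $n=4$ the vanishing label cannot be the middle point of $S$ (a central zero would again force both neighbours to be $\ge 2$), so after a rotation and a possible reflection the diagram is exactly the one in fig.~\ref{fig:lem2}: $m_0=0$, $m_6=1$, the rest free. Applying N to the semicircle $\{m_6,m_7,m_0\}$ gives $m_7\ge p-1=k$, the analogue of ``$m_7=p$'' in Lemma~\ref{lem:Ge1}.

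Next I would count cofacets for this configuration. The opposite pairs contribute $m_1m_5+m_2+m_3m_7$ (the pair $\{m_0,m_4\}$ is dead and $\{m_2,m_6\}$ gives $m_2$), and a direct inspection of the triangles through $O$ gives the five products $m_1m_3m_6,\ m_1m_4m_6,\ m_1m_4m_7,\ m_2m_4m_7,\ m_2m_5m_7$. Using $m_6=1$ this yields
\[
 f = m_1m_5 + m_2 + m_3m_7 + m_1m_3 + m_1m_4 + m_7\bigl(m_1m_4 + m_2m_4 + m_2m_5\bigr).
\]
The relevant instances of N are $m_1+m_2\ge p$, $m_2+m_3+m_4\ge p$, $m_4+m_5\ge p-1$, $m_5+m_7\ge p-1$, $m_1+m_7\ge p$, together with $m_7\ge p-1$. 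The dominant term is $m_7m_4(m_1+m_2)\ge p(p-1)m_4$, with companion $m_7m_2(m_4+m_5)\ge (p-1)^2m_2$.

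I would finish by the bookkeeping of Lemma~\ref{lem:Ge1}: if $m_4\ge 2$ or $m_2\ge 2$ the two displayed bounds already bring $f$ close to $2p^2$, and the leftover is recovered from $m_3m_7+m_1m_3$ after bounding $m_3$ below via $m_2+m_3+m_4\ge p$; the remaining possibilities $m_2,m_4\in\{0,1\}$ reduce to a handful of sub-cases in which N pins down $m_1,m_3,m_5$ (for instance $m_2=1,\ m_4=0$ forces $m_3,m_5\ge p-1$ and $m_1\ge p-1$, whence $f\ge 4(p-1)^2$). The excluded value $n=3$, where $S$ has no interior zero, is treated separately and more easily: there $f = m_0m_3+m_1m_4+m_2m_5+m_0m_2m_4+m_1m_3m_5$, two of the labels are forced to be $\ge p-1$ by N, and each sub-case gives $f\ge 2p^2$.

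I expect the main obstacle to be exactly the boundary sub-cases of the $n=4$ analysis, where several of $m_1,\dots,m_5$ are simultaneously $0$ or $1$. Because $m_7$ is only guaranteed to be $\ge p-1$ (rather than $\ge p$, as in Lemma~\ref{lem:Ge1}), each product term loses a factor that must be paid back out of the lower-order terms, and the inequality $f\ge 2(k+1)^2$ becomes delicate for small $k$. This is what pins the hypothesis at $k\ge 4$ and explains why the cases $k\le 3$ are deferred to the computer search of Proposition~\ref{pr:k6}.
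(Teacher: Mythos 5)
Your proposal is correct and follows essentially the same route as the paper's own proof: reduce to the four‑diameter configuration of fig.~\ref{fig:lem2} with the zero at one end of the semicircle, write the identical cofacet polynomial $f = m_2 + m_3m_7 + m_2m_5m_7 + m_4m_7(m_1+m_2) + m_1(m_3+m_4+m_5)$, extract the same property‑N inequalities (including $m_7\ge p-1$), and close with a case split on small values of $m_2$ and $m_4$. The only cosmetic differences are that the paper absorbs your separate $n=3$ case into the $n=4$ computation by allowing $m_4=0$ (and allows $m_3=0$ likewise), and that in the $m_4\ge 2$ case the slack is recovered from $m_2(1+m_5m_7)+m_1(m_3+m_4+m_5)$ via $m_1+m_2\ge p$ rather than from $m_3m_7+m_1m_3$, which can vanish when $m_3=0$.
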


\begin{proof}
	By Lemma~\ref{lem:NonZero}, the sum of two consecutive labels cannot be less than 2.
	Consequently, the neighbors of the label with value 1 must have positive values.
	Therefore, the number of diameters of the diagram cann't be greater than 4.
	The general case is shown in the fig.~\ref{fig:lem2}.
	By assumption, $m_8 = 0$.
	
	Here, we do not need to consider the cases analyzed in Lemma~\ref{lem:Ge1}.
	Hence 
	\begin{equation}
	\label{eq:m12}
	m_1 > 0, \quad m_2 > 0
	\end{equation}
	(the corresponding points are painted in fig.~\ref{fig:lem2}).
	By Lemma~\ref{lem:NonZero}, 
	\begin{equation}
	\label{eq:m5}
	m_5 > 0.
	\end{equation}
	
	Let $p = k+1$. By the property N,
	\begin{align}
	m_7 + 1 &\ge p, \label{eq:G1} \\
	m_1 + m_2 &\ge p, \label{eq:G2} \\
	m_2 + m_3 + m_4 &\ge p,  \label{eq:G3} \\
	m_3 + m_4 + m_5 &\ge p,  \label{eq:G4} \\
	m_4 + m_5 + 1 &\ge p. \label{eq:G5}
	\end{align}
	Let us count the number of cofacets:
	$$
	f = m_2 + m_3 m_7 + m_2 m_5 m_7
	+ m_4 m_7 (m_1 + m_2) + m_1 (m_3 + m_4 + m_5).
	$$
	In particular, by using \eqref{eq:G1} and \eqref{eq:G5} we obtain
	\begin{equation}
	\label{eq:Lem2}
	\begin{aligned}
	f &\ge m_3 m_7 + m_2 m_7 (m_4 + m_5) + m_1 (m_3 + m_4 + m_5) \\
	&\ge m_3 (p-1) + m_2 (p-1)^2 + m_1 (m_3 + m_4 + m_5).
	\end{aligned}
	\end{equation}
	
	Further proof is reduced to an analysis of all possible cases regarding the values of labels $m_4$ and $m_2$.
	In~all cases, we get $f \ge 2 p^2$ for $p \ge 5$.
	\begin{align*}
	\intertext{\indent $\mathbf{m_4 \ge 2}$.
		By the inequalities \eqref{eq:m12}, \eqref{eq:m5}, \eqref{eq:G1}, \eqref{eq:G4}, we have
	}
	f &\ge m_2 (1 + m_5 m_7) + m_4 m_7 (m_1 + m_2) + m_1 (m_3 + m_4 + m_5) \\
	&\ge m_2 p + 2 (p-1) p + m_1 p \ge 2 p^2.\\
	\intertext{\indent $\mathbf{m_2 \ge 3}$.
		By using \eqref{eq:Lem2}, \eqref{eq:m12}, \eqref{eq:G4}, we obtain
	}
	f &\ge 3 (p-1) (p-1) + p \ge 2 p^2.\\
	\intertext{\indent $\mathbf{m_2 = 2}$ \textbf{and} $\mathbf{m_4 \le 1}$.
		Hence $m_1 \ge p-2$, $m_3 \ge p-3$, $m_5 \ge p-2$, and
	}
	f &\ge (p-3)(p-1) + 2(p-1)^2 + (p-2)(p-3 + p-1) \ge 2 p^2.\\
	\intertext{\indent $\mathbf{m_2 = 1}$ \textbf{and} $\mathbf{m_4 \le 1}$.
		This implies that $m_1 \ge p-1$, $m_3 \ge p-2$, $m_5 \ge p-2$, 
	}
	f &\ge (p-2)(p-1) + (p-1)^2 + (p-1)(p-2+p-1) \ge 2p^2. \hfil \qed
	\end{align*}
	\renewcommand{\qed}{}\end{proof}

\begin{lemma} 
	\label{lem:Induction}
	Let $D$ be an extremal $k$-neigh\-bor\-ly Gale diagram.
	Suppose that in every open half-plane, there is at least two positive labels,
	and if there are exactly two positive labels, then their values are not less than 2.
	Then $f(D) \ge 2(k+1)^2$ for $k \ge 5$.
\end{lemma}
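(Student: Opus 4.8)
The plan is to keep the shorthand $p=k+1$ and to split the cofacet count as $f=E+T$, where $E$ is the total over the diameter cofacets (opposite pairs with both labels positive) and $T$ is the total over the triangle cofacets $\{a,b,c\}$; recall that such a triple is a cofacet precisely when each of the three arcs it cuts off is shorter than a semicircle. First I would extract the one structural fact behind the hypothesis: for any positive label $m_i$ and any open semicircle containing it, the labels other than $m_i$ in that semicircle sum to at least $2$ (either there is a third positive label, or the unique companion has value at least $2$). Thus \eqref{eq:UpBoundM} holds with $q=2$, and Proposition~\ref{pr:MaxL} gives the uniform bound $m_i\le p-2$ for every $i$. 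Since the proof of Proposition~\ref{pr:MaxL} uses only minimality, this bound is available throughout the induction below, not merely for the extremal diagram. I would also record that the hypothesis forces $n\ge 3$, because for $n=2$ an open semicircle is a single vertex and cannot contain two positive labels.

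Next I would prove $f\ge 2p^2$ by induction on the number of diameters $n$, labelling the diameters $(x_1,\dots,x_n)$ and $(y_1,\dots,y_n)$ as in the generic diagram. The essential difference from Lemmas~\ref{lem:Ge1} and~\ref{lem:Ge2} is that here \emph{no} label equals $p$, so neither factor of a $p^2$ can be supplied by a single label; both factors of each $p^2$ must instead come from (antipodal, crossing) semicircle sums. The content of the base estimate will be that the geometry always exhibits at least two essentially independent products of two semicircle sums, each at least $p^2$. The base of the induction is the \emph{zero-free} case, in which every diameter is complete: there every opposite pair already contributes a diameter cofacet, and, all labels being positive, the number of $O$-containing triangles is large, so I would combine the semicircle inequalities (each open semicircle $\ge p$), the bound $m_i\le p-2$, and the sum bound $\sum_i m_i\le 4(k+1)$ of Theorem~\ref{th:Marcus} to pin $f$ below by $2(k+1)^2$ directly.

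For the inductive step I would use the presence of an \emph{incomplete} diameter, i.e.\ a zero label opposite a positive one. When one exists, the bound $m_i\le p-2$ guarantees, exactly as in the proofs of Lemmas~\ref{lem:Dpairs} and~\ref{lem:NonZero}, that the semicircle adjacent to the zero pole already sums to at least $k+2$; hence the displace operation of Lemma~\ref{lem:ShiftZero} applies, keeps the diagram $k$-neigh\-bor\-ly, and, crucially, \emph{strictly} lowers the number of cofacets, so no monotonicity of $f$ has to be argued by hand. Iterating empties an entire diameter, which is then removed by the standard deletion, producing (after possibly re-minimizing, which only lowers $f$ further) a minimal $k$-neigh\-bor\-ly diagram with $n-1$ diameters and with no larger $f$. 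That smaller diagram falls into one of the three exhaustive cases, so it is covered by Lemma~\ref{lem:Ge1}, by Lemma~\ref{lem:Ge2} (legitimate since $k\ge 5$), or by the induction hypothesis; in each case $f\ge 2p^2$, and since every step only decreased $f$, the same bound holds for the original diagram.

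The step I expect to be hardest is the zero-free base (together with the smallest diagrams it subsumes): the factor $2$ in $2(k+1)^2$ must be produced with no label as large as $p$, so both factors of each $p^2$ have to come from semicircle sums, and one must verify that no distribution of labels permitted simultaneously by Theorem~\ref{th:Marcus} and by $m_i\le p-2$ can drive $f$ below $2(k+1)^2$ for any admissible number of diameters. A secondary, more mechanical obstacle is the bookkeeping in the reduction: confirming that the $k+2$ slack demanded by Lemma~\ref{lem:ShiftZero} is genuinely present at each displacement, and that emptying and then deleting a diameter always returns us to the family of minimal diagrams that the three cases, and hence Lemmas~\ref{lem:Ge1} and~\ref{lem:Ge2} and the induction hypothesis, are able to control.
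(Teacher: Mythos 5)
There is a genuine gap, in fact two. First, your inductive step (on the number of diameters) is self-defeating. The displace operation of Lemma~\ref{lem:ShiftZero} preserves the total label sum and \emph{strictly} reduces the number of cofacets while keeping the diagram $k$-neighborly; this is precisely why the paper invokes it in Lemmas~\ref{lem:Dpairs} and~\ref{lem:NonZero} \emph{only to derive a contradiction with extremality}. If the operation were applicable to your extremal $D$, then $D$ would not be extremal; so you can never apply it even once, let alone iterate it to empty and delete a diameter. Independently of that, the slack $\sum_{j=i}^{i+n-2} m_j \ge k+2$ needed for the operation to preserve $k$-neighborliness does not follow from the uniform bound $m_i \le p-2$: in the paper it is extracted from the specific local configurations of Lemmas~\ref{lem:Dpairs} and~\ref{lem:NonZero} (a complete diameter adjacent to the zero, or $m_{n+i}+m_{n+i+1} > m_{2n+i-1}+m_{2n+i}$), and there is no reason it holds at a generic incomplete diameter. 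Second, your base case --- the zero-free diagram, which you yourself identify as the hardest step --- is only gestured at: you do not exhibit any combination of ``each open semicircle $\ge p$'', ``$m_i \le p-2$'' and ``$\sum_i m_i \le 4(k+1)$'' that forces $f \ge 2p^2$, and this is not routine, because the triangle cofacets are counted by which triples straddle the center, which depends on the angular positions of the labels and not merely on semicircle sums.

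For comparison, the paper inducts on $k$ rather than on $n$ (base $k=5$ from Proposition~\ref{pr:k6}). Given an extremal $(t+1)$-neighborly $D$, it picks a complete diameter (which exists by Remark~\ref{rem:simplicial}), decrements both of its labels by $1$ to obtain an (at least) $t$-neighborly diagram $D'$ with the same number of diameters, writes the exact difference
\[
 f(D) - f(D') = \Bigl(\sum_{i=2}^n x_i\Bigr)\Bigl(\sum_{i=2}^n y_i\Bigr) - \sum_{i=2}^n x_i y_i + x_1 + y_1 + 1,
\]
and shows by a two-case estimate (all $\min\{x_i,y_i\}\le 2$, versus some $x_{i'},y_{i'}\ge 3$ combined with $x_{i'},y_{i'}\le t$ from Proposition~\ref{pr:MaxL} with $q=2$) that this difference is at least $4(t+1)+2$, which added to $f(D')\ge 2(t+1)^2$ gives $2(t+2)^2$. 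The hypothesis of the lemma enters exactly to license $q=2$ in Proposition~\ref{pr:MaxL}; you identified that correctly, but the reduction mechanism you propose around it does not work.
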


\begin{proof}
	The proof is by induction over $k$.
	The case $k=5$ is covered by Proposition~\ref{pr:k6}.
	
	Suppose that the statement of lemma is true for $k = t$, $t \ge 5$.
	
	Let us consider an extremal $(t+1)$-neigh\-bor\-ly Gale diagram $D$ with labels $\{m_0, \dots, m_{2n-1}\}$.
	By Lemma~\ref{lem:Simplicial}, 
	the diagram has at least one complete diameter $\{m_{j^*}, m_{n+j^*}\}$.
	Reduce $m_{j^*}$ and $m_{n+j^*}$ by 1.
	Obviously, the new diagram $D'$ is at least $t$-neigh\-bor\-ly.
	For the labels of $D'$ we will use the following notations (see fig.~\ref{fig:Induction}):
	\[
	\begin{aligned}
	x_1 &= m_{j^*} - 1,   & y_1 &= m_{n+j^*} - 1,\\
	x_i &= m_{j^*+i-1}, & y_i &= m_{n+j^*+i-1}, \ i\in[2,n].
	\end{aligned}
	\] 
	\begin{figure}
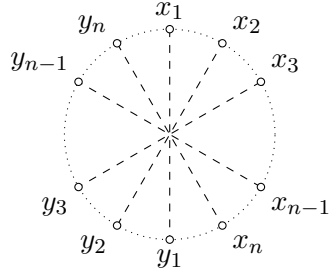

		\centering\picInduction
		\caption{The induction step}\label{fig:Induction}
	\end{figure}
	
	Observe that
	\begin{equation}
	\label{eq:XiYi}
	\sum_{i=2}^n x_i \ge t+2 \quad \text{and} \quad \sum_{i=2}^n y_i \ge t+2.
	\end{equation}
	
	By the induction hypothesis, $f(D') \ge 2(t+1)^2$.
	Hence it is sufficient to prove that
	\[
	f(D) - f(D') \ge 4(t+1) + 2.
	\]
	
	By assumption, the source diagram $D$ satisfies 
	the conditions of Lemma~\ref{pr:MaxL} with $q=2$.
	Thus
	\begin{equation*}
	m_i \le t+2 - 2,  \quad \forall i\in[2n],
	\end{equation*}
	and
	\begin{equation}
	\label{eq:leql}
	x_j \le t, \quad y_j \le t,  \quad \forall j\in[n].
	\end{equation}
	
	Let us count the difference between $f(D)$ and $f(D')$.
	We start from the cofacets with exactly two points.
	The diagram $D$ has exactly $x_1 + y_1 + 1$ such cofacets that do not present in $D'$.
	When reducing $m_{j^*}$ and $m_{n+j^*}$, there are also lost triangular cofacets of the following types:
	\begin{enumerate}
		\item $\{j^*, r, s\}$, \phantom{${} + n$\!}where $r\in[j^*+2, j^* + n - 1]$ and $s\in[j^*+n+1, r+n-1]$.
		\item $\{j^*+n, r, s\}$, where $r\in[j^*+1,\, j^* + n - \,2]$ and $s\in[r+n+1, j^*+2n-1]$.
	\end{enumerate}
	In~total, we get
	\begin{equation}
	\label{eq:D1D2}
	f(D) - f(D') = \left(\sum_{i=2}^n x_i \right) \left(\sum_{i=2}^n y_i \right) - \sum_{i=2}^n x_i y_i + x_1 + y_1 + 1.
	\end{equation}
	
	Let us suppose that $\min\{x_i, y_i\} \le 2$ for every $i \in[2, n]$.
	Hence
	\[
	\sum_{i=2}^n x_i y_i \le 2 \left(\sum_{i=2}^n y_i \right) 
	+ 2 \left(\sum_{i=2}^n x_i \right) - 4.
	\] 
	Combining this with \eqref{eq:XiYi}, we obtain
	\begin{equation*}
	\begin{aligned}
	f(D) - f(D') &\ge \left(\sum_{i=2}^n x_i  - 2\right) \left(\sum_{i=2}^n y_i  - 2\right) + x_1 + y_1 + 1 \\
	&\ge t^2 + 1 \ge 4(t+1) + 2 \quad \text{for } t \ge 5.
	\end{aligned}
	\end{equation*}
	
	Now we suppose that
	$$
	\max_{i \in[2, n]}(\min\{x_i, y_i\}) \ge 3.
	$$
	Hence $x_{i'} \ge 3$ and $y_{i'} \ge 3$ for some $i' \in [2, n]$.
	In~addition, $x_{i'} \le t$ and $y_{i'} \le t$ by~\eqref{eq:leql}.
	Let $I = [n]\setminus \{1, i'\}$ and 
	\[
	G(x_{i'}, y_{i'}) = \left(\sum_{i\in I} x_i \right) \left(\sum_{i\in I} y_i \right) - \sum_{i\in I} x_i y_i.
	\]
	It is clear that $G(x_{i'}, y_{i'}) \ge 0$.
	Combining this with \eqref{eq:D1D2}, we get
	\begin{equation*}
	\begin{aligned}
	f(D) - f(D') &= G(x_{i'}, y_{i'}) + y_{i'} \sum_{i\in I} x_i + x_{i'} \sum_{i\in I} y_i + x_1 + y_1 + 1 \\
	&\ge y_{i'} \sum_{i\in I} x_i + x_{i'} \sum_{i\in I} y_i + x_1 + y_1 + 1.
	\end{aligned}
	\end{equation*}
	Since the diagram $D'$ is $t$-neigh\-bor\-ly, we have
	\[
	\sum_{i=1}^n (x_i+y_i) - x_{i'} - y_{i'} \ge 2(t+1).
	\]
	Thus
	\[
	x_1 + y_1 \ge 2t + 2 - \sum_{i \in I} (x_i+y_i)
	\]
	and
	\[
	f(D) - f(D') \ge (y_{i'} - 1) \sum_{i\in I} x_i + (x_{i'} - 1) \sum_{i\in I} y_i + 2t + 3.
	\]
	
	From \eqref{eq:XiYi}, we have
	\[
	\sum_{i\in I} x_i \ge t+2 - x_{i'} \quad \text{and} \quad \sum_{i\in I} y_i \ge t+2 - y_{i'}.
	\]
	Hence
	\[
	f(D) - f(D') \ge (y_{i'} - 1) (t+2 - x_{i'}) + (x_{i'} - 1) (t+2 - y_{i'}) + 2t + 3.
	\]
	For the sake of convenience, we use the notations:
	\[
	p = t+1, \qquad x = x_{i'} - 1, \qquad y = y_{i'} - 1.
	\]
	Hence $p \ge 6$ (by assumption), $x \in [2, p-2]$, $y \in [2, p-2]$, and
	\[
	f(D) - f(D') \ge y (p - x) + x (p - y) + 2p + 1.
	\]
	But
	\[
	y (p - x) + x (p - y) \ge 4 (p - 2) \qquad \text{for } x \in [2, p-2],\  y \in [2, p-2].
	\]
	Therefore,
	\[
	f(D) - f(D') \ge 6p - 7 \ge 4p + 2 = 4(t+1) + 2 \quad \text{if } t \ge 5.
	\hfil \qed
	\]
	\renewcommand{\qed}{}\end{proof}

\section*{Acknowledgements}

The author wants to thank Arnau Padrol for his useful comments.

%
%

\bigskip

\appendix

\lstset{
	breaklines=true,
	tabsize=4, 
	formfeed=\newpage, 
	extendedchars=true, 
	basicstyle=\ttfamily, 
	commentstyle=\rmfamily\itshape, 
	stringstyle=\slshape, 
	numbers=left, 
	numbersep=1em, 
	stepnumber=1, 
	numberstyle=\footnotesize\color{black}, 
}

\section{The source code for the enumerating\\ extremal k-neighborly Gale diagrams}
\label{app:gale.cpp}

\lstdefinestyle{cpp}{
	language=[ANSI]C++,
	morekeywords={string, list} 
}

\lstinputlisting[style=cpp]{gale-extr.cpp}  

\end{document}